\documentclass[letterpaper,12pt]{article}
\usepackage{amsmath,amsthm,amssymb}
\usepackage{mathtools}
\usepackage{float}
\usepackage{indentfirst}
\usepackage[margin=1in,letterpaper]{geometry}
\usepackage{cite} 
\usepackage[final]{hyperref} 
\hypersetup{
	colorlinks=true,
	linkcolor=blue,
	citecolor=blue,
	filecolor=magenta,
	urlcolor=blue         
}
\usepackage[english]{babel}
\usepackage{comment}
\usepackage{enumitem}
\usepackage{changepage}
\usepackage{authblk}
\usepackage[small]{titlesec}

\titleformat*{\section}{\normalsize\bfseries}
\titleformat*{\subsection}{\normalsize\bfseries}

\usepackage{tikz}
\usetikzlibrary{matrix, arrows.meta, positioning, calc}
\usetikzlibrary{decorations.pathreplacing}
\usepackage{mathdots}
\usepackage{nicematrix}

\newcommand{\RR}{\mathbb{R}}
\newcommand{\CC}{\mathbb{C}}
\newcommand{\NN}{\mathbb{N}}

\newcommand{\nrank}{\operatorname{nrank}}

\newcommand{\JJ}{\mathcal{J}}
\newcommand{\LL}{\mathcal{L}}
\newcommand{\OO}{\mathcal{O}}
\newcommand{\PP}{\mathcal{P}}
\newcommand{\MM}{\mathcal{M}}
\newcommand{\MN}{\mathcal{N}}
\newcommand{\KK}{\mathcal{K}}
\newcommand{\CO}{\overline{\mathcal{O}}}

\newcommand{\dH}{\mathrm{\mathbf{edH}}}
\newcommand{\dHI}{\mathrm{\mathbf{edH^I}}}

\theoremstyle{plain}
\newtheorem{theorem}{Theorem}[section]

\newtheorem{lemma}[theorem]{Lemma}
\newtheorem{proposition}[theorem]{Proposition}
\newtheorem{observation}[theorem]{Observation}

\theoremstyle{definition}

\newtheorem{example}[theorem]{Example}

\theoremstyle{remark}
\newtheorem{remark}[theorem]{Remark}

\title{\Large On equivalence classes of dissipative Hamiltonian pencils}
\author[]{\normalsize Maria Dronka}
\affil[]{\footnotesize Instytut Matematyki, Wydzia\l{} Matematyki i Informatyki, Uniwersytet Jagiello\'nski, \par
ul. \L{}ojasiewicza 6, 30-348 Krak\'ow, Poland (\href{mailto:maria.dronka@doctoral.uj.edu.pl}{maria.dronka@doctoral.uj.edu.pl})}

\date{}

\begin{document}

\maketitle

\begin{abstract}
    We study matrix pencils of the form \(\lambda E - (J-R)Q\), where \(Q^*E = E^*Q \geq 0\), \(J^* = -J\), \(R^* = R \geq 0\), and \(\lambda E - Q\) is regular, in the framework of Pokrzywa's ordering of strict equivalence orbits. We characterise the maximal elements, analyse some properties of the orbit closures, and derive a restriction on possible degenerations when \(Q = I\). We also consider the case where \(\lambda E - Q\) may be singular, which naturally arises in the study of orbit closures, and obtain a partial result on the Kronecker canonical form. The theory is illustrated by numerous examples. 
\end{abstract}

{\footnotesize
\textbf{Keywords:}
dissipative Hamiltonian system, matrix pencil, singular pencil, strict equivalence, orbit closure, Kronecker canonical form

\vspace{0.5em}

\textbf{AMS Subject Classification (2020):}
15A18, 15A21, 15A22
}

\section{Introduction}

By a \textit{matrix pencil}, we mean a first-degree matrix polynomial \(\PP = \lambda A - B\), where \(A, B \in \CC^{m \times n}\); the set of all matrix pencils with coefficients in $\mathbb C^{m \times n}$ is denoted by \(\CC[\lambda]^{m \times n}\). We say that matrix pencils \(\MM, \PP \in \CC[\lambda]^{m \times n}\) are \textit{strictly equivalent} if and only if there exist nonsingular matrices \(V \in \CC^{m \times m}\) and \(W \in \CC^{n \times n}\) such that \(\MM = V \PP W\). The \textit{strict equivalence orbit} (or simply \textit{orbit}) of a pencil \(\PP \in \CC[\lambda]^{m \times n}\) is the set of all pencils strictly equivalent to \(\PP\), that is
\[
\OO (\PP) := \{ V \PP W : V \in \CC^{m \times m} \text{ and } W \in \CC^{n \times n} \text{ are nonsingular}\}.
\]
The closure of the orbit, denoted by \(\CO\), is considered in the Euclidean topology of the set \(\CC^{m \times n} \times \CC^{m \times n}\).

The study of inclusion relations between orbit closures plays a central role in perturbation theory of matrix pencils; see e.g. \cite{ edelman_1, edelman_2, johansson}. If pencils \(\MM\) and \(\MN\) satisfy \(\CO(\MM) \subset \CO(\MN)\), then \(\MN\) is said to be \textit{more generic} than \(\MM\), or equivalently, \(\MN\) \textit{degenerates} into \(\MM\). Intuitively, an arbitrarily small perturbation of \(\MM\) may yield a pencil strictly equivalent to \(\MN\). The characterisation of the orbit closure hierarchy, minimal degenerations, and most generic canonical forms of \textit{structured} matrix pencils -- that is, pencils with structured coefficient matrices, such as (skew-)Hermitian or (skew-)symmetric ones --  remains an active area of research; see \cite{das, dmyt_2014, de teran_0, de teran_1, dmyt_2018}. A useful tool for this analysis was introduced by Pokrzywa \cite{Pokrzywa}, who defined six types of simple degenerations between blocks in the Kronecker canonical form that suffice to characterise the inclusion relation between orbit closures of matrix pencils; see Theorem \ref{th_pokrzywa} and \cite{futorny} for a modern treatment.

In this paper we employ Pokrzywa's framework to study the orbit closure structure of pencils of the form
\begin{align} \label{eq_form_general}
    \MM = \lambda E - (J-R)Q, \quad \text{with } \ &Q^*E = E^*Q \geq 0, \quad  J^* = -J, \quad
    R^* = R \geq 0,
\end{align}
where \(E, J, R, Q \in \CC^{n \times n}\), the symbol \(^*\) denotes the conjugate transpose, and by \(A \geq 0\) we denote that Hermitian matrix \(A\) is positive semidefinite. Matrix pencils of this form are associated with linear time-invariant dissipative Hamiltonian descriptor systems, a special class of port-Hamiltonian descriptor systems arising in energy-based modelling of dynamical systems; for more details and applications see e.g. \cite{beattie_1, beattie_2, blazhko, gernandt, MMW_2021, mehrmann, van der schaft}.
Apparently, the structural constraints \eqref{eq_form_general} impose a certain structure on the orbit degeneracy, as will be seen in the course of the current paper.

The main motivation for the present work is the important study \cite{MMW_2018}, which developed the theory of dissipative Hamiltonian descriptor systems; see also \cite{faulwasser, MMW_2022} for further results. The authors characterised the spectral properties of pencils of the form \eqref{eq_form_general} under the additional assumption that \(\lambda E - Q\) is regular. 
Yet, several important aspects remain unresolved.  Throughout the paper, we refer to the pencils satisfying \eqref{eq_form_general} together with regularity of \(\lambda E - Q\) as \textit{dissipative Hamiltonian (dH) pencils}.
The additional regularity assumption ensures that the eigenvalues of dH pencils are contained in the closed left half-plane; see Theorem \ref{th_2classes}. However, notice that the dH structure is generally not preserved under arbitrarily small perturbations. In particular, a dH pencil may belong to the orbit closure of a pencil of the form \eqref{eq_form_general} with singular \(\lambda E - Q\), as illustrated in the following example. 

\begin{example} \label{ex_motivation}
    Consider the following matrix pencils:
    \[
    \MM =
    \begin{bmatrix}
        \lambda & 0 \\
        0 & 0
    \end{bmatrix}
    \quad \text{and} \quad
    \MN =
    \begin{bmatrix}
        \lambda & 0 \\
        \epsilon & 0
    \end{bmatrix},
    \]
    where \(\epsilon > 0\).
    We have \(\MM \in \CO(\MN)\), so an arbitrarily small perturbation of pencil \(\MM\) may result in a pencil strictly equivalent to \(\MN\). Observe that \(\MM\) is a dH pencil, as \(\MM = \lambda E - (J-R)Q\) for
    \[
    E = \begin{bmatrix}
        1 & 0 \\
        0 & 0
    \end{bmatrix},
    \quad
    J = R = 0,
    \quad \text{and} \quad
    Q = I_2,
    \]
    with \(\lambda E - Q\) being regular. On the other hand, by Theorem \ref{th_2classes}, \(\MN\) is not a dH pencil, as it has a left minimal index one (see Example \ref{ex_kcf_def}). However, we obtain that \( \MN = \lambda \hat{E} - (\hat{J}-\hat{R})\hat{Q}\) for
    \[
    \hat{E} = \begin{bmatrix}
        1 & 0 \\
        0 & 0
    \end{bmatrix},
    \quad
    \hat{J} = \begin{bmatrix}
        0 & 1 \\
        -1 & 0
    \end{bmatrix},
    \quad
    \hat{R} = 0,
    \quad
    \text{and} \quad
    \hat{Q} = \begin{bmatrix}
        \epsilon & 0 \\
        0 & 0
    \end{bmatrix},
    \]
    where pencil \(\lambda \hat{E} - \hat{Q}\) is singular.
\end{example}

Consequently, a deeper understanding of the orbit closure structure of dH pencils and the spectral properties of pencils of the form \eqref{eq_form_general} without any regularity assumptions is essential for a comprehensive algebraic theory of dH pencils.
The present work constitutes a first step in this direction.

The main contributions of the paper are as follows. We derive a restriction on possible degenerations of dH pencils when \(Q = I\); see Theorem \ref{th_nonQ}, characterise the maximal elements of the partial order induced by orbit closure inclusion; see Theorem \ref{th_max_el}, and analyse some of their properties. In Theorem \ref{th_nondegen} we characterise
when the orbit closure of a dH pencil contains only matrix pencils strictly equivalent to dH pencils, and we illustrate it with the orbit stratification in the \(2\times2\) case. Furthermore, Theorem \ref{th_closure} is a version of the classical result stating that the closure of an orbit is the union of the orbit itself and orbits of strictly smaller dimension, restricted to the class of matrix pencils strictly equivalent to dH pencils. In all results we consider the additional case of \(Q = I\).

In addition, following \cite{MMW_2018}, we derive a complete characterisation of the Kronecker canonical form for pencils \(\lambda E - Q\) satisfying \(E^*Q = Q^*E \geq 0\); see Theorem \ref{th_EQ}. We also obtain a property of the associated orbit closures and illustrate it with the orbit stratification in the \(2 \times 3\) case.

The final result of the paper is Theorem \ref{th_singular}, which provides a partial
characterisation of the Kronecker canonical form for pencils of the form \eqref{eq_form_general} with possibly singular \(\lambda E - Q\). We show that such pencils may contain canonical blocks of all types and sizes, although not in arbitrary combinations. Moreover, for an arbitrary right singular, Jordan, or nilpotent
block, we explicitly construct a pencil satisfying
\eqref{eq_form_general} whose Kronecker canonical form contains such block; see Lemmas \ref{lem_right}, \ref{lem_jordan}, and \ref{lem_nilpotent}.

The paper is organised as follows. In Section \ref{sec_preliminaries} we introduce notation and recall preliminary results. In Section \ref{sec_dh} we study the orbit hierarchy of dH pencils, i.e. pencils of the form \eqref{eq_form_general} with regular \(\lambda E-Q\). In Section \ref{sec_EQ} we consider possibly non-square and possibly singular pencils of the form \(\lambda E-Q\) satisfying \(E^*Q = Q^*E \geq 0\). In Section \ref{sec_singular} we study pencils of the form \eqref{eq_form_general} with possibly singular \(\lambda E- Q\).

\section{Preliminaries} \label{sec_preliminaries}

Pencils belonging to the same orbit share the \textit{Kronecker canonical form} (KCF), which is a generalisation of the Jordan canonical form for matrices. To introduce it, let us first define the following four special types of matrix pencils:
\begin{itemize}
    \item \textit{right singular blocks} of size \(k \times (k + 1)\), \(k \geq 0\),
    \[
        \LL_k := \begin{bmatrix}
        \lambda & -1 & &  \\
        & \ddots & \ddots & \\
        & & \lambda & -1
        \end{bmatrix},
    \]
    \item \textit{left singular blocks}, \(\LL_k^T\), of size \((k + 1) \times k\), \(k \geq 0\),
    \item \textit{Jordan blocks} of size \(k \times k \), \(k \geq 1\), corresponding to a finite eigenvalue \(\mu \in \CC\),
    \[
        \JJ_k^\mu :=
        \begin{bmatrix}
        \lambda - \mu & -1 & &  \\
        & \lambda - \mu & \ddots & \\
        & & \ddots & -1 \\
        & & & \lambda - \mu
        \end{bmatrix},
    \]
    \item \textit{nilpotent blocks} of size \(k \times k\), \(k \geq 1\), corresponding to the eigenvalue infinity,
    \[
        \JJ_k^\infty := 
        \begin{bmatrix}
        -1 & \lambda & &  \\
        & -1 & \ddots & \\
        & & \ddots & \lambda \\
        & & & -1
        \end{bmatrix},
    \]
\end{itemize}

Note that all unspecified entries of the pencils and matrices throughout the paper are zero. For brevity, we sometimes denote the direct sum of \(p\) blocks of the same type and size, for example \(\LL_0\), by \(p \LL_0\). Let \(\overline{\CC} := \CC \cup \{\infty\}\).

\begin{theorem}[Kronecker canonical form, {\cite[Ch. XII]{Gant}}]
     Let \(\PP \in \CC[\lambda]^{m \times n}\) be a matrix pencil. Then there exist nonsingular matrices \(V \in \CC^{m \times m}\) and \(W \in \CC^{n \times n}\) such that
     \[
     V \PP W = \LL_{\alpha_1} \oplus \ldots \oplus \LL_{\alpha_p} \oplus \LL_{\beta_1}^T \oplus \ldots \oplus \LL_{\beta_q}^T \oplus \JJ_{\gamma_1}^{\mu_1} \oplus \ldots \oplus \JJ_{\gamma_s}^{\mu_{s}},
     \]
     with \(p, q, s \geq 0\), and \(\mu_j \in \overline{\CC}\). Moreover, the direct sum is uniquely determined up to permutation of the blocks.
\end{theorem}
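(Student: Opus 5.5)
The plan is the classical route of Gantmacher and Kronecker: split off the singular part by minimal indices, then treat the remaining regular pencil with Weierstrass theory and the Jordan form.

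\emph{Step 1 (right singular part).} Suppose the columns of \(\PP\) are linearly dependent over the field \(\CC(\lambda)\). Then there is a nonzero polynomial vector \(x(\lambda)=\sum_{i=0}^{\varepsilon}(-1)^i\lambda^i x_i\) with \(\PP x\equiv 0\), and we choose its degree \(\varepsilon\) minimal. Comparing coefficients gives the chain
\[
Bx_0=0,\quad Ax_{i-1}=Bx_i \ (1\le i\le \varepsilon),\quad Ax_\varepsilon=0,
\]
up to signs. From minimality of \(\varepsilon\) we derive two independence facts:
\begin{itemize}
\item the vectors \(x_0,\dots,x_\varepsilon\) are linearly independent;
\item the vectors \(Ax_0,\dots,Ax_{\varepsilon-1}\) are linearly independent.
\end{itemize}
Completing these families to bases of \(\CC^n\) and \(\CC^m\) transforms \(\PP\) into
\[
\begin{bmatrix}\LL_\varepsilon & D\\ 0 & \PP_1\end{bmatrix}.
\]

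\emph{Step 2 (decoupling).} We must show the coupling block \(D\) can be eliminated, i.e. that the equation \(\LL_\varepsilon Y - X\PP_1 = D\) is solvable for constant matrices \(X,Y\). Two inputs are needed:
\begin{itemize}
\item \(\PP_1\) has no polynomial null vectors of degree \(<\varepsilon\), which again follows from minimality of \(\varepsilon\);
\item the explicit bidiagonal shape of \(\LL_\varepsilon\), which lets us solve the system by working row by row through the coefficients.
\end{itemize}
This is the main obstacle: a careful degree-counting or linear-algebra argument is needed to show that the map \((X,Y)\mapsto \LL_\varepsilon Y - X\PP_1\) is surjective.

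\emph{Step 3 (induction and left blocks).} Induct on size to split off every right block. Then apply the same procedure to the transposed pencil \(\PP^T\) to extract the blocks \(\LL_\beta^T\). What remains is a square regular pencil \(\lambda A'-B'\).

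\emph{Step 4 (regular part).} Pick \(c\in\CC\) with \(\det(cA'-B')\neq0\). By a Möbius change of variable, reduce to the case where \(A'\) and \(B'\) decompose into an invertible part and a nilpotent part. This uses the Fitting decomposition of \((cA'-B')^{-1}A'\). On the invertible part, the pencil is equivalent to \(\lambda I - M\), and the Jordan form of \(M\) gives the blocks \(\JJ^\mu\). On the nilpotent part, the pencil is equivalent to \(\lambda N - I\) with \(N\) nilpotent, and the Jordan form of \(N\) gives the blocks \(\JJ^\infty\).

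\emph{Step 5 (uniqueness).} Show that the multiset of blocks is determined by strict-equivalence invariants:
\begin{itemize}
\item the minimal indices, i.e. the degrees of a minimal polynomial basis of the right and left null spaces over \(\CC(\lambda)\), computed blockwise;
\item the elementary divisors, obtained from the invariant factors of the homogenized pencil \(\lambda A-\mu B\).
\end{itemize}
Both are clearly invariant under strict equivalence, and together they determine the block sizes and eigenvalues.
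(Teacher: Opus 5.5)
Your outline is correct and is essentially the classical proof in Gantmacher, Ch.~XII: split off $\LL_\varepsilon$ using a minimal-degree polynomial null vector, transpose to extract the $\LL_\beta^T$ blocks, treat the regular part by Weierstrass/Jordan, and get uniqueness from minimal indices and elementary divisors; the paper cites exactly this source instead of giving a proof. The step you flag closes as you expect: comparing coefficients of $\lambda$ and $1$ row by row, the rows of $Y$ are determined by those of $X$, and the remaining consistency conditions form a linear system for $X$ whose block-bidiagonal coefficient matrix (built from the coefficients of $\PP_1$) has full column rank precisely because $\PP_1$ has no nonzero polynomial null vector of degree at most $\varepsilon-2$.
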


The sizes of the singular blocks, \(\alpha_1, \ldots, \alpha_p\), and \(\beta_1, \ldots, \beta_q\), are called the \textit{right} and \textit{left minimal indices} of pencil \(\PP\), respectively.
The finite values among the \(\mu_j\)'s are called the \textit{finite eigenvalues} of pencil \(\PP\), whereas if some \(\mu_k = \infty\) we say that pencil \(\PP\) has the \textit{eigenvalue infinity}. 
The sizes \(\gamma_i\) of blocks associated with a fixed eigenvalue \(\mu \in \overline{\CC}\) are called the \textit{partial multiplicities} of \(\mu\). 
An eigenvalue is \textit{semisimple} if its partial multiplicities are all equal to one.
The \textit{index} of pencil \(\PP\) is the size of the largest nilpotent block associated with the eigenvalue infinity. If the KCF of pencil \(\PP\) does not contain any such blocks, i.e. infinity is not an eigenvalue of \(\PP\), the index of pencil \(\PP\) is equal to zero.

\begin{example} \label{ex_kcf_def}
    Consider the matrix pencils from Example \ref{ex_motivation}. We obtain
    \[
    \MM = \begin{bmatrix}
        \lambda & 0 \\
        0 & 0
    \end{bmatrix}
    = \JJ_1^0 \oplus \LL_0 \oplus \LL_0^T,
    \quad \text{and} \quad
    \begin{bmatrix}
        1 & 0 \\
        0 & - \frac{1}{\epsilon}
    \end{bmatrix}
    \MN =
    \begin{bmatrix}
        \lambda & 0 \\
        - 1 & 0
    \end{bmatrix}
    = \LL_1^T \oplus \LL_0.
    \]
    Therefore, \(\MM\) has a right minimal index zero, a left minimal index zero, and a semisimple eigenvalue \(\mu = 0\), as the partial multiplicity of \(\mu\) is one. Pencil \(\MN\) has a right minimal index zero, a left minimal index one, and no eigenvalues.
\end{example}

The \textit{normal rank} (or shortly \textit{rank}) of pencil \(\PP\) is the size of its largest non-identically zero minor and is denoted by \(\nrank \PP\). A matrix pencil is called \textit{regular} if it is square and of the full rank, or equivalently, if it has no minimal indices. Otherwise, it is called \textit{singular}. We divide the KCF of a pencil into the \textit{regular part}, which is the direct sum of all Jordan and nilpotent blocks, and the \textit{singular part}, which is a direct sum of all singular blocks.

We say that matrices \(A, B \in \CC^{m \times n}\) have a \textit{common right nullspace} if \(\ker A \ \cap \ \ker B \neq \{0\} \).

The Kronecker canonical form of dH pencils was characterised in \cite[Th. 3.1]{MMW_2022}, with the additional case of \(Q = I\), where \(I\) denotes the identity matrix of proper size. We recall this result below, as we will often refer to it throughout the paper.

\begin{theorem}
\label{th_2classes}
    \begin{enumerate}[label=\rm(\roman*), nosep]
    \item \label{dH_class} A pencil \(\MM \in \CC[\lambda]^{n \times n}\) is strictly equivalent to a pencil of the form \eqref{eq_form_general} with regular \(\lambda E - Q\) if and only if the following conditions hold:
        \begin{enumerate}[label=\rm(\alph*), nosep]
        \item The eigenvalues of \(\MM\) lie in the closed left half-plane.
        \item The finite nonzero eigenvalues on the imaginary axis are semisimple, and the partial multiplicities of the eigenvalue zero are at most two.
        \item The index is at most two.
        \item The left minimal indices are all zero, and the right minimal indices are at most one (if there are any).
        \end{enumerate}

    \item \label{dHI_class} A pencil \(\MM \in \CC[\lambda]^{n \times n}\) is strictly equivalent to a pencil of the form
    \eqref{eq_form_general} with \(Q = I\) if and only if the following conditions hold:
        \begin{enumerate}[label=\rm(\alph*), nosep]
        \item The eigenvalues of \(\MM\) lie in the closed left half-plane.
        \item The finite eigenvalues on the imaginary axis (including zero) are semisimple.
        \item The index is at most two.
        \item The left and right minimal indices are all zero (if there are any).
        \end{enumerate}
    \end{enumerate}
\end{theorem}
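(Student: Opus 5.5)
We prove the two directions separately, treating \ref{dH_class} and \ref{dHI_class} in parallel. The class of pencils of the form \eqref{eq_form_general} is closed under direct sums: take block-diagonal \(E,J,R,Q\). Regularity of \(\lambda E-Q\) and the property \(Q=I\) are also preserved under direct sums. Strict equivalence is compatible with direct sums too. Hence sufficiency reduces to realising each admissible Kronecker block, or a small admissible combination of blocks, as a dH pencil.

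For the sufficiency in \ref{dHI_class}, the regular part with finite eigenvalues is \(\lambda I - A\), where \(A\) has eigenvalues in the open left half-plane together with semisimple eigenvalues on \(i\RR\). By the Lyapunov inequality there is \(X>0\) with \(AX+XA^*\le 0\). Then \(\lambda X - AX\) is strictly equivalent to \(\lambda I-A\). Writing \(AX = J-R\) with \(J=\tfrac12(AX-XA^*)\) and \(R=-\tfrac12(AX+XA^*)\ge 0\) gives the required form with \(E=X\) and \(Q=I\). The remaining blocks are realised as follows.
\begin{itemize}
\item \(\JJ_1^\infty\): take \(E=0\), \(J=0\), \(R=1\).
\item \(\JJ_2^\infty\): take \(E=\operatorname{diag}(1,0)\), \(J=\left[\begin{smallmatrix}0&1\\-1&0\end{smallmatrix}\right]\), \(R=0\). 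A direct check shows \(\lambda E - J\) is regular of index two.
\item \(\LL_0\oplus\LL_0^T\): take the \(1\times1\) zero pencil.
\end{itemize}
For \ref{dH_class} we additionally need \(\JJ_2^0\) and \(\LL_1\) (paired with enough \(\LL_0^T\) blocks to make the pencil square). Both are realised with singular \(Q\), using small explicit \(2\times2\) or \(3\times3\) examples with \(\lambda E-Q\) regular. For instance, \(\JJ_2^0\) arises from \(E=I\), \(Q=\operatorname{diag}(1,0)\) and a skew-symmetric \(J\).

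For necessity, let \(\MM(\lambda_0)x=0\) with \(x\ne0\). Multiplying by \(x^*Q^*\) and taking real parts gives
\[
\operatorname{Re}\lambda_0\, x^*Q^*Ex=-x^*Q^*RQx\le 0 .
\]
If \(x^*Q^*Ex>0\), we conclude \(\operatorname{Re}\lambda_0\le 0\). Otherwise \(Q^*E\ge0\) forces \(Q^*Ex=0\). Combining this with \(\MM(\lambda_0)x=0\) and the regularity of \(\lambda E-Q\) is meant to show that either \(Qx=0\), so that \(\lambda_0=0\) (the case of the zero eigenvalue with singular \(Q\)), or a contradiction arises. When \(Q=I\) the second case is immediate.

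The remaining necessary conditions concern semisimplicity on \(i\RR\), partial multiplicities at zero, index at most two, and the minimal indices. They follow from a chain argument. Take a Jordan chain \(\MM(\lambda_0)x_1=0\), \(\MM(\lambda_0)x_2=Ex_1\) with \(\operatorname{Re}\lambda_0=0\). The identity above gives \(RQx_1=0\). Pairing the second equation with \(Qx_1\) then yields \(x_1^*Q^*Ex_1=0\), which for \(Q=I\) or \(\lambda_0\ne0\) contradicts the first step. An analogous chain argument for the reversed pencil \(E-\mu(J-R)Q\) bounds the index by two. The minimal indices are handled by applying the same quadratic-form identity to polynomial kernel vectors of \(\MM\) and \(\MM^*\).

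The main obstacle is this necessity part for singular \(Q\), which is the case of \ref{dH_class}. I would first bring \(\lambda E-Q\) to a condensed form by a unitary congruence-type transformation. Because \(\lambda E-Q\) is regular and \(Q^*E\ge0\), this should give \(E=\operatorname{diag}(E_1,0)\) and \(Q=\operatorname{diag}(Q_1,\ast)\) with \(Q_1^*E_1>0\), separating \(\ker Q\) from the rest. I would then read off that the zero eigenvalue can carry chains of length at most two and that the right minimal indices are at most one, with all left minimal indices zero.
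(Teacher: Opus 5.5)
The paper gives no proof of this theorem; it recalls it from the work of Mehl, Mehrmann and Wojtylak, so your proposal has to stand on its own. Your sufficiency half does stand:
\begin{itemize}
\item the Lyapunov construction \(E=X\), \(J-R=AX\), \(Q=I\) is correct;
\item your realisations of \(\JJ_1^\infty\), \(\JJ_2^\infty\), \(\LL_0\oplus\LL_0^T\) and \(\JJ_2^0\) check out;
\item the block \(\LL_1\oplus\LL_0^T\), which you leave implicit, comes from \(E=\operatorname{diag}(1,0)\), \(Q=\operatorname{diag}(0,1)\), \(J=\left[\begin{smallmatrix}0&1\\-1&0\end{smallmatrix}\right]\), \(R=0\). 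This is the case \(k=1\) of Lemma~\ref{lem_right}.
\end{itemize}

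The necessity half has a genuine gap: it never separates off the singular part of \(\MM\), although the theorem explicitly admits singular pencils. If \(\MM\) is singular, then \(\MM(\lambda_0)x=0\) has nonzero solutions for every \(\lambda_0\in\CC\). So this equation detects neither eigenvalues nor Jordan chains, and the dichotomy your eigenvalue and chain arguments rest on is false. In the example above, \(\MM=\left[\begin{smallmatrix}\lambda&-1\\0&0\end{smallmatrix}\right]\). Every \(\lambda_0\) with \(\operatorname{Re}\lambda_0>0\) admits \(x=(1,\lambda_0)^T\) with \(\MM(\lambda_0)x=0\), \(Q^*Ex=0\) and \(Qx\neq0\), and regularity of \(\lambda E-Q\) gives no contradiction. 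For \(Q=I\), the \(1\times1\) zero pencil shows that the ``immediate'' contradiction is not there either.

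For \(Q=I\) the repair is short and uses your own energy identity. Every polynomial right or left null vector takes values in \(\ker E\cap\ker J\cap\ker R\). This subspace splits off by a unitary congruence, leaving a regular pencil of the same form to which your arguments apply.

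For singular \(Q\), however, this separation is the core of part~\ref{dH_class}, and you only announce it. Your chain arguments end with \(Q^*Ex_1=0\), resp.\ \(Q^*Ex_2=0\), which is not yet a contradiction. (The argument at infinity has to be done directly, pairing with \(Qx_1\), since \(E-\mu(J-R)Q\) is not of the form \eqref{eq_form_general}.) You still have to build a polynomial null vector of degree at most one whose value at \(\lambda_0\) (resp.\ leading coefficient) is a nonzero multiple of \(x_1\). That shows \(x_1\) lies in the singular part.

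This needs a normal form, and the one you propose is misstated. With \(E=\operatorname{diag}(E_1,0)\) you only get \(Q_1^*E_1\geq0\), which is singular exactly when \(Q\) is. A correct normal form is reached in two steps:
\begin{itemize}
\item apply a unitary \(U\) and a nonsingular \(W\), via \(E\mapsto UEW\), \(Q\mapsto UQW\), \(J\mapsto UJU^*\), \(R\mapsto URU^*\);
\item follow with a structure-preserving diagonal scaling.
\end{itemize}
This gives \(E=\operatorname{diag}(I,I,0)\) and \(Q=\operatorname{diag}(I,0,I)\). After a permutation, \(\MM=\left[\begin{smallmatrix}\mathcal{N}(\lambda)&0\\-B&\lambda I\end{smallmatrix}\right]\) with \(\mathcal{N}(\lambda)=\lambda\operatorname{diag}(I,0)-(J'-R')\) of \(Q=I\) type.

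From there you would still need to show four things:
\begin{itemize}
\item right null vectors have degree at most one;
\item left null vectors vanish on the rows of the \(\lambda I\) block and are constant;
\item the structure at nonzero finite eigenvalues and at infinity is that of \(\mathcal{N}\);
\item the coupling \(B\) lengthens zero-eigenvalue chains only up to length two.
\end{itemize}
None of this is carried out, so necessity in part~\ref{dH_class} is not established when \(Q\) is singular.
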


We will refer to the sets of matrix pencils satisfying the conditions described in \ref{dH_class} and \ref{dHI_class} as classes \(\dH\) and \(\dHI\), respectively. The notation reflects the fact that class \(\dH\) consists of pencils strictly equivalent to dH pencils (in particular, it contains all dH pencils), whereas \(\dHI\) consists of pencils strictly equivalent to dH pencils satisfying the additional assumption \(Q = I\). Notice that \(\dHI \subset \dH\) and the defining conditions differ only in the partial multiplicities of the eigenvalue zero and the right minimal indices.

Our main tool for investigating the orbit closure structure of matrix pencils will be the following result, originating in the work of Pokrzywa \cite{Pokrzywa}, derived in \cite{boley}, and presented here in the form of \cite[Th. 2.2]{dmyt_2018}.

\begin{theorem} \label{th_pokrzywa}
    Let \(\MM, \PP \in \CC[\lambda]^{m \times n}\) be two matrix pencils. Then \(\PP \in \CO(\MM)\) if and only if the KCF of \(\MM\) can be obtained  from the KCF of \(\PP\) through a finite sequence of operations on the blocks of the following six types:
    
    \begin{enumerate}[itemsep=-3pt, topsep=0pt]
        \item\label{R1} \(\LL_{j-1} \oplus \LL_{k+1} \rightsquigarrow \LL_j \oplus \LL_k, \: 1 \leq j \leq k\);
        \item\label{R2} \(\LL_{j-1}^T \oplus \LL_{k+1}^T \rightsquigarrow \LL_j^T \oplus \LL_k^T, \: 1 \leq j \leq k\);
        \item\label{R3} \(\LL_j \oplus \JJ_{k+1}^\mu \rightsquigarrow \LL_{j+1} \oplus \JJ_k^\mu, \: j, k \geq 0 \text{ and } \mu \in \overline{\CC}\);
        \item\label{R4} \(\LL_j^T \oplus \JJ_{k+1}^\mu \rightsquigarrow \LL_{j+1}^T \oplus \JJ_k^\mu, \: j, k \geq 0 \text{ and } \mu \in \overline{\CC}\);
        \item\label{R5} \(\JJ_j^\mu \oplus \JJ_k^\mu \rightsquigarrow \JJ_{j-1}^\mu \oplus \JJ_{k+1}^\mu, \: 1 \leq j \leq k \text{ and } \mu \in \overline{\CC}\);
        \item\label{R6} \(\LL_p \oplus \LL_q^T \rightsquigarrow \bigoplus_{i=1}^t \JJ_{k_i}^{\mu_i}, \text{ if } p + q + 1 = \sum_{i=1}^t k_i, \text{ and } \mu_i \neq \mu_{j} \text{ for } i \neq j, \mu_i \in \overline{\CC}\).
    \end{enumerate}
\end{theorem}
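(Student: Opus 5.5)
This result is cited from \cite{Pokrzywa, boley, dmyt_2018} rather than proved in the paper. If I were to prove it, I would treat the two directions separately, using transitivity of the degeneration relation: if \(\PP \in \CO(\MM')\) and \(\MM' \in \CO(\MM)\), then \(\PP \in \CO(\MM)\), since orbit closures are unions of orbits and are invariant under strict equivalence.

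\emph{Sufficiency.} It is enough to handle a single elementary operation, because transitivity takes care of finite chains. Moreover, if \(\PP_1 \in \CO(\MM_1)\), then \(\PP_1 \oplus \PP_2 \in \CO(\MM_1 \oplus \PP_2)\). So it suffices to treat the six two-block situations one at a time. For each operation \(\mathcal{X} \rightsquigarrow \mathcal{Y}\), I would write the KCF \(\mathcal{X}\) and add \(\epsilon\) to a single suitably chosen entry. For example, for the third operation I would couple the last column of \(\LL_j\) to the first row of \(\JJ_{k+1}^\mu\). I would then check, via ranks of minors or explicit reduction, that for \(\epsilon \neq 0\) the perturbed pencil has KCF \(\mathcal{Y}\). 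For the sixth operation I would instead perturb \(\LL_p \oplus \LL_q^T\) into a square pencil whose determinant is \(\epsilon\prod_i (\lambda-\mu_i)^{k_i}\) (homogenised if \(\infty\) occurs). Because the \(\mu_i\) are distinct, I would then use a companion-type structure, which has geometric multiplicity one for each eigenvalue, to force exactly one Jordan block per \(\mu_i\).

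\emph{Necessity.} I would encode each KCF by lower semicontinuous quantities. For every \(\mu \in \overline{\CC}\) and \(k \geq 1\), consider the ranks of the block bidiagonal matrices built from \((A-\mu B, B)\), or from \((B, A)\) for \(\mu = \infty\). These count partial multiplicities together with contributions of singular blocks. Also consider the ranks of the block Toeplitz matrices \(T_k\) whose kernels are the polynomial vectors of degree \(<k\) in the right, respectively left, kernel; these count the minimal indices. Ranks can only drop in the limit, so \(\PP \in \CO(\MM)\) yields a family of inequalities between the invariants of \(\PP\) and \(\MM\). These are Pokrzywa's majorisation-type conditions on the sequences of minimal indices and on the Weyr characteristics, corrected by the number of singular blocks.

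\emph{Combinatorial step.} Finally, I would show that whenever these inequalities hold and the KCFs differ, some elementary operation applied to \(\PP\) yields a pencil \(\PP'\) that still satisfies the inequalities with respect to \(\MM\) and is strictly closer to it in a suitable well-founded measure, such as orbit dimension or total dominance gap. Induction then produces the required chain. I expect this step to be the main obstacle. Choosing the right operation requires a careful case analysis: first match the numbers of left and right singular blocks using the sixth operation, then adjust the minimal indices by the first and second operations, shuffle between singular and regular parts by the third and fourth, and finally fix the Jordan structure at each \(\mu\) by the fifth (the matrix dominance order). At each stage one has to verify that the interlocking inequalities are preserved.
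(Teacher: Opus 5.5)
The paper does not prove this statement; it imports it from Pokrzywa, Boley and Dmytryshyn et al. Your outline follows the usual route, and your \emph{if} direction is sound. Transitivity and the direct-sum argument reduce it to single moves. Moves 1--5 are indeed realised by one-entry perturbations, with \(\varepsilon\) placed in the \(\lambda\)-coefficient where needed (e.g.\ move 3 with \(\mu=\infty\)). For move 6, a coupling \(\varepsilon C(\lambda)\) in the lower-left block of \(\LL_p\oplus\LL_q^T\) gives determinant \(\pm\varepsilon\, w(\lambda)^T C(\lambda) v(\lambda)\), where \(v,w\) are the kernel vectors of \(\LL_p\) and \(\LL_q^T\). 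This can be made any polynomial of degree at most \(p+q+1\). The block-triangular shape keeps the rank at least \(p+q\) at every point of \(\overline{\CC}\), so each eigenvalue carries exactly one Jordan block. The semicontinuity inequalities in the \emph{only if} direction are also correct.

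The gap is the combinatorial step. That these inequalities force a chain of the six moves is not a verification; it is the whole content of the cited theorem, namely Pokrzywa's inequality description of \(\CO(\MM)\) together with its translation into moves. You give neither a rule for choosing the next move nor a proof that a move preserving all inequalities always exists.

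Your staged plan cannot be run with naive local choices, because choices in the first stage already depend on what later stages must achieve. Take \(\PP=\LL_0\oplus\LL_2\oplus 2\LL_0^T\) and \(\MM=\LL_1\oplus\LL_1^T\oplus\JJ_1^\mu\). Here \(\PP\in\CO(\MM)\) via moves 1, 6, 4:
\(\PP\rightsquigarrow 2\LL_1\oplus 2\LL_0^T\rightsquigarrow \LL_1\oplus\LL_0^T\oplus\JJ_1^\mu\oplus\JJ_1^\nu\rightsquigarrow\MM\).
So all your inequalities hold. Yet the most natural start of your first stage, move 6 on \(\LL_0\oplus\LL_0^T\), produces \(\LL_2\oplus\LL_0^T\oplus\JJ_1^\nu\). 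This pencil has no nonzero right kernel vectors of degree \(<2\), whereas \(\MM\) has a one-dimensional space of them. By your own semicontinuity argument, it therefore lies outside \(\CO(\MM)\) for every \(\nu\), and the process is stuck. If one starts with move 6, it must instead merge \(\LL_2\) with \(\LL_0^T\). Possible targets are \(\JJ_3^\mu\), or \(\JJ_1^a\oplus\JJ_1^b\oplus\JJ_1^\mu\) with eigenvalues \(a,b\) absent from \(\MM\), which moves 3 and 4 later absorb.

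What is missing is an explicit selection procedure. It must specify which pair \(\LL_p\oplus\LL_q^T\) to merge, into which block sizes and eigenvalues, and when to rebalance first with moves 1--5. It must come with a proof that each such choice preserves every inequality and strictly decreases a well-founded measure. Without it, your \emph{only if} direction rests on exactly the result the paper cites.
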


Here, the symbol \(\rightsquigarrow\) indicates that, in the KCF of a matrix pencil, the blocks on the left-hand side are replaced by those on the right-hand side. Moreover, by \(\JJ_0^\mu\) we denote the \(0 \times 0\) matrix.

\section{Strict equivalence orbit structure of dissipative Hamiltonian pencils} \label{sec_dh}

\subsection{Restriction on degenerations in the case \(Q = I\)}

Let us begin with an observation that a pencil from class \(\dHI\) cannot degenerate into a pencil outside this class without a change in rank, i.e.

\begin{theorem} \label{th_nonQ}
    Let \(\MM \in \CC[\lambda]^{n \times n}\) be a matrix pencil belonging to \(\dHI\). If pencil \(\PP \in \CC[\lambda]^{n \times n}\) satisfies \(\PP \in \CO(\MM)\) and \(\nrank \PP = \nrank \MM\), then \(\PP\) also belongs to \(\dHI\).    
\end{theorem}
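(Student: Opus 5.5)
The plan is to apply Theorem \ref{th_pokrzywa} to the pair \(\PP \in \CO(\MM)\). This gives a finite sequence of operations of types \ref{R1}--\ref{R6} that transforms the KCF of \(\PP\) into the KCF of \(\MM\). The rank hypothesis will rule out most of these operations. The remaining ones will turn out to be too weak to break the defining conditions of \(\dHI\) in Theorem \ref{th_2classes}\ref{dHI_class}.

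\textbf{Step 1: only operations \ref{R1}--\ref{R5} occur.} Compare normal ranks. The normal rank of a pencil equals its number of columns minus its number of right singular blocks, since each \(\LL_k\) has rank \(k\) and \(k+1\) columns, and the other blocks have full column rank. Operations \ref{R1}--\ref{R5} preserve the number of \(\LL\) blocks and the number of \(\LL^T\) blocks, so they preserve rank. Operation \ref{R6} removes one \(\LL\) block and so raises the rank by one. Hence no operation along the sequence lowers rank, and \(\nrank \PP = \nrank \MM\) forces operation \ref{R6} never to be used. In particular, \(\PP\) has the same number of right singular blocks as \(\MM\), and the same number of left singular blocks.

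\textbf{Step 2: minimal indices.} Track the sum of the right minimal indices along the sequence. Operation \ref{R1} keeps this sum unchanged and operation \ref{R3} increases it by one. Since \(\MM \in \dHI\), all its right minimal indices are zero, so the sum for \(\MM\) is \(0\). The sum for \(\PP\) is therefore at most \(0\), which means all right minimal indices of \(\PP\) are zero and operation \ref{R3} is never applied. Operation \ref{R1} also cannot occur. It requires a block \(\LL_{k+1}\) with \(k \geq 1\), and no such block can ever appear: the only other operations acting on \(\LL\) blocks are \ref{R1} itself and \ref{R3}, and \ref{R3} is excluded. The symmetric argument with operations \ref{R2} and \ref{R4} shows that all left minimal indices of \(\PP\) are zero and that these operations are not used either. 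So condition (d) holds for \(\PP\), and the singular parts of \(\PP\) and \(\MM\) coincide.

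\textbf{Step 3: the regular part.} Only operation \ref{R5} remains, and it acts within the blocks of a single eigenvalue \(\mu\). I will check two invariants along each step \(\JJ_j^\mu \oplus \JJ_k^\mu \rightsquigarrow \JJ_{j-1}^\mu \oplus \JJ_{k+1}^\mu\) with \(1 \le j \le k\).
\begin{itemize}
\item The set of eigenvalues does not change. The block \(\JJ_{k+1}^\mu\) survives even when \(j=1\), and no new eigenvalue is created.
\item For each eigenvalue, the largest partial multiplicity does not decrease, since \(k+1 > k \ge j\).
\end{itemize}
Consequently, \(\PP\) and \(\MM\) have the same eigenvalues, which gives (a). For every \(\mu\), the largest partial multiplicity of \(\mu\) in \(\PP\) is at most that in \(\MM\). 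Semisimplicity of the purely imaginary eigenvalues therefore passes to \(\PP\), giving (b), and the index of \(\PP\) is at most the index of \(\MM\), which is at most two, giving (c). Hence \(\PP \in \dHI\).

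The main obstacle is the bookkeeping in Step 2, namely excluding operation \ref{R1} cleanly. The monotone quantity (the sum of minimal indices) together with the constant number of singular blocks should settle it, because it pins every \(\LL\) and \(\LL^T\) block of \(\PP\) to size zero.
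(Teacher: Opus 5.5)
Your proof is correct and follows essentially the same route as the paper: Theorem \ref{th_pokrzywa} together with the rank count rules out operation \ref{R6}, the size-zero singular blocks of \(\MM\) rule out \ref{R1}--\ref{R4}, and \ref{R5} cannot break the defining conditions of \(\dHI\). The only difference is bookkeeping. You track global monotone quantities (the sum of minimal indices and the largest partial multiplicity of each eigenvalue) along the whole sequence, whereas the paper argues backwards one step at a time, showing that \(\PP_i \in \dHI\) implies \(\PP_{i-1} \in \dHI\).
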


\begin{proof}
Let \(\MM, \PP \in \CC[\lambda]^{n \times n}\) be matrix pencils satisfying the assumptions of the theorem. Without loss of generality, we may assume that all pencils are in the KCF.

By Theorem \ref{th_pokrzywa}, there exists a finite sequence of matrix pencils \((\PP_i)_{i=0}^m\) such that \hbox{\(\PP_0 = \PP\)}, \(\PP_m = \MM\), and \(\PP_i\) can be transformed into \(\PP_{i+1}\) by applying one of the operations \ref{R1}-\ref{R6}, for \(0 \leq i \leq m - 1\). Notice that operations \ref{R1}-\ref{R5} do not change the rank of the pencil, while operation \ref{R6} increases it. Hence, from the equality of ranks of \(\PP\) and \(\MM\), we must have
\[
\nrank \PP = \nrank \PP_1 = \nrank \PP_2 = \ldots = \nrank \MM,
\]
and operation \ref{R6} can be excluded.

To complete the proof, it suffices to show that for \(1 \leq i \leq m\), if \(\PP_i\) belongs to class \(\dHI\), and \(\PP_i\) is obtained from \(\PP_{i-1}\) via one of the operations \ref{R1}-\ref{R5}, then \(\PP_{i-1}\) also belongs to class \(\dHI\).

First, notice that, by Theorem \ref{th_2classes}, such \(\PP_i\) contains only the singular blocks of size \(0\) (if any). Thus, we can exclude operations \ref{R1}-\ref{R4}, as they would require \(\PP_i\) to contain singular blocks of size at least \(1\). It follows that \(\PP_i\) must be obtained from \(\PP_{i-1}\) by applying operation \ref{R5}. Let us check that in such a case \(\PP_i\) must satisfy all the properties of matrix pencils belonging to class \(\dHI\), as described in Theorem \ref{th_2classes}:

\begin{enumerate}[label=(\alph*)]
    
    \item Operation \ref{R5} does not influence the set of eigenvalues of the pencil, i.e., the set of eigenvalues of \(\PP_{i-1}\) is the same as that of \(\PP_i\), which is contained in the closed left half-plane.
    
    \item Let \(\mu\) be an arbitrary finite eigenvalue of the pencil \(\PP_{i-1}\) lying on the imaginary axis. The Jordan blocks associated with \(\mu\) in \(\PP_i\) are of size at most \(1\). However, applying operation \ref{R5} would yield two blocks which differ in size by at least \(2\) (or an empty matrix \(\JJ_0^\mu\) and a block of size at least \(2\)). It follows that \(\PP_{i}\) cannot be obtained from \(\PP_{i-1}\) by applying operation \ref{R5} to the blocks associated with \(\mu\). Hence, the blocks of this type in \(\PP_{i-1}\) are the same as in \(\PP_i\).
    
    \item Consider the Jordan blocks associated with the eigenvalue \(\infty\). Such blocks in \(\PP_i\) are of size at most \(2\). Therefore, the only possibility of \(\PP_i\) being obtained as a result of applying operation \ref{R5} to the blocks associated with the eigenvalue \(\infty\) in \(\PP_{i-1}\) is
    \[
    \PP_{i-1} = \hat{\PP} \oplus \JJ_1^\infty \oplus \JJ_1^\infty \rightsquigarrow \hat{\PP} \oplus \JJ_0^\infty \oplus \JJ_2^\infty = \PP_i,
    \]
    where \(\hat{\PP}\) is the appropriate subpencil of \(\PP_{i-1}\) that remains unchanged. In this case, \(\PP_{i-1}\) contains the blocks associated with the eigenvalue \(\infty\) that also appear in \(\PP_i\), as \(\hat{\PP}\) is also a subpencil of \(\PP_i\), along with two blocks \(\JJ_1^\infty\), all of size at most \(2\).
    Otherwise, \(\PP_{i-1}\) contains precisely the same blocks of this type as \(\PP_i\).
    
    \item Operation \ref{R5} leaves the singular blocks unchanged. Thus, \(\PP_{i-1}\) contains the same singular blocks as \(\PP_i\), i.e. singular blocks of size \(0\) (if any).
\end{enumerate}
\end{proof}

All assumptions in Theorem \ref{th_nonQ} are necessary. The necessity of the rank equality assumption is illustrated in the example below, while for the other assumptions it is straightforward.

\begin{example}
    Consider the pencils \(\MM = \JJ_2^\infty\) and \(\PP = \LL_0 \oplus \LL_1^T\), so that we have
    \[
    \nrank \MM = 2 > 1 = \nrank \PP.
    \]
    The inclusion \(\PP \in \CO(\MM)\) follows from Theorem \ref{th_pokrzywa}, since \(\PP\) can be transformed into \(\MM\) by applying operation \ref{R6}. However, by Theorem \ref{th_2classes}, \(\MM\) belongs to class \(\dHI\), while \(\PP\) does not, as it contains a left singular block of size \(1\).
\end{example}

Theorem \ref{th_nonQ} is no longer true when we consider the larger class \(\dH\), as the next example shows. This means that pencils belonging to \(\dH\) can degenerate into pencils outside this class even without a change in rank.

\begin{example}
    Consider the matrix pencils \(\MM = \LL_1 \oplus \LL_0^T\) and \(\PP = \LL_0 \oplus \LL_0^T \oplus \JJ_1^1\). Pencil \(\PP\) can be transformed into pencil \(\MM\) via operation \ref{R3}, hence \(\PP \in \CO(\MM)\) by Theorem \ref{th_pokrzywa}. Moreover, we have \(\nrank \MM =  \nrank \PP = 1\). However, by Theorem \ref{th_2classes} pencil \(\MM\) belongs to class \(\dH\), while pencil \(\PP\) does not, as it has an eigenvalue in the open right half-plane.
\end{example}

Moreover, there are no further implications between the assumptions and the conclusion of Theorem \ref{th_nonQ}, as demonstrated by the counterexamples below.

\begin{example}
    \begin{enumerate}[label=\rm(\arabic*)]
        \item Let \(\MM = \LL_0 \oplus \LL_1^T\) and \(\PP = \LL_0 \oplus \LL_0^T \oplus \JJ_1^0\). Then \(\PP \in \CO(\MM)\) by operation \ref{R4} from Theorem \ref{th_pokrzywa}, and \(\nrank \MM = \nrank \PP = 1\). However, by Theorem \ref{th_2classes}, \(\PP\) belongs to class \(\dHI\), while \(\MM\) does not.
        \item By Theorem \ref{th_2classes}, pencils \(\MM = \JJ_1^0\) and \(\PP = \LL_0 \oplus \LL_0^T\) both belong to class \(\dHI\), and \(\PP \in \CO(\MM)\) via operation \ref{R6} from Theorem \ref{th_pokrzywa}. However, \(\nrank \MM = 1 > 0 = \nrank \PP\).
        \item For \(\MM = \JJ_1^\infty\) and \(\PP = \JJ_1^0\), we have \(\nrank \MM = \nrank \PP = 1\), and both pencils belong to class \(\dHI\) by Theorem \ref{th_2classes}. However, \(\PP \notin \CO(\MM)\), as it is straightforward that \(\MM\) cannot be obtained from \(\PP\) by applying any of the operations \ref{R1}-\ref{R6} from Theorem~\ref{th_pokrzywa}.
        \end{enumerate}
\end{example}

\subsection{Maximal elements of the partial order induced by orbit closure inclusion}

For pencils \(\MM, \PP \in \CC[\lambda]^{n \times n}\), let us denote the partial order relation \(\CO(\PP) \subset \CO(\MM)\) by \(\PP \prec_c \MM\). In the following theorem, we characterise the maximal elements of the partial order \(\prec_c\) within classes \(\dH\) and \(\dHI\).

\begin{theorem} \label{th_max_el}
\begin{enumerate}[label=\rm(\roman*)]
    \item \label{max_el_i} The maximal elements of the partial order \(\prec_c\) in \(\CC[\lambda]^{n \times n}\) within class \(\dH\) are of the following KCF, up to permutation of the blocks:
    \begin{equation}\label{eq_max_dh}
    \begin{split}
    &\JJ_{s_1}^{\mu_1} 
    \oplus \ldots 
    \oplus \JJ_{s_p}^{\mu_p}
    \oplus \underbrace{\JJ_1^{\nu_1}
    \oplus \ldots
    \oplus \JJ_1^{\nu_1}}_{n_1 \text{ blocks}}
    \oplus \ldots 
    \oplus \underbrace{\JJ_1^{\nu_q}
    \oplus \ldots
    \oplus \JJ_1^{\nu_q}}_{n_q \text{ blocks}}
    \\
    &\oplus \underbrace{\JJ_1^0}_{d_0 \text{ blocks}}
    \oplus \underbrace{\JJ_2^0
    \oplus \ldots
    \oplus \JJ_2^0}_{v \text{ blocks}}
    \oplus \underbrace{\JJ_1^\infty}_{d_\infty \text{ blocks}} 
    \oplus \underbrace{\JJ_2^\infty 
    \oplus \ldots 
    \oplus \JJ_2^\infty}_{w \text{ blocks}},
    \end{split}
    \end{equation}
    where \(\mu_1, \ldots, \mu_p\) are distinct eigenvalues contained in the open left half-plane, \\
    \(\nu_1, \ldots, \nu_q\) are distinct nonzero eigenvalues lying on the imaginary axis, \\
    \(p, q, v, w \geq 0\), \(s_1, \ldots, s_p, n_1, \ldots, n_q \geq 1\), and \(0 \leq d_0, d_\infty \leq 1\), with
    \[
    \sum_{i=1}^p s_i + \sum_{j=1}^q n_j + d_0 + 2v + d_\infty + 2w = n.
    \]
    (We follow the convention that a sum with an upper index smaller than the lower index is equal to zero.)
    
    \item \label{max_el_ii} The maximal elements of the partial order \(\prec_c\) in \(\CC[\lambda]^{n \times n}\) within class \(\dHI\) are of the following KCF, up to permutation of the blocks:
    \begin{equation}\label{eq_max_dhi}
    \begin{split}
    &\JJ_{s_1}^{\mu_1} 
    \oplus \ldots 
    \oplus \JJ_{s_p}^{\mu_p}
    \oplus \underbrace{\JJ_1^{\nu_1}
    \oplus \ldots
    \oplus \JJ_1^{\nu_1}}_{n_1 \text{ blocks}}
    \oplus \ldots 
    \oplus \underbrace{\JJ_1^{\nu_q}
    \oplus \ldots
    \oplus \JJ_1^{\nu_q}}_{n_q \text{ blocks}}
    \\
    &\oplus \underbrace{\JJ_1^0
    \oplus \ldots
    \oplus \JJ_1^0}_{n_0 \text{ blocks}}
    \oplus \underbrace{\JJ_1^\infty}_{d_\infty \text{ blocks}} 
    \oplus \underbrace{\JJ_2^\infty 
    \oplus \ldots 
    \oplus \JJ_2^\infty}_{w \text{ blocks}},
    \end{split}
    \end{equation}
    where \(\mu_1, \ldots, \mu_p\) are distinct eigenvalues contained in the open left half-plane, \\
    \(\nu_1, \ldots, \nu_q\) are distinct nonzero eigenvalues lying on the imaginary axis, \\
    \(p, q, n_0, w \geq 0\), \(s_1, \ldots, s_p, n_1, \ldots, n_q \geq 1\), and \(0 \leq d_\infty \leq 1\), with
    \[
    \sum_{i=1}^p s_i + \sum_{j=0}^q n_j + d_\infty + 2w = n.
    \]
\end{enumerate}    
\end{theorem}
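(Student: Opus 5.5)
We use Theorem~\ref{th_pokrzywa} together with Theorem~\ref{th_2classes}. Recall that \(\PP \prec_c \MM\) holds if and only if the KCF of \(\MM\) arises from the KCF of \(\PP\) by a finite sequence of operations \ref{R1}--\ref{R6}. Also, distinct orbits have distinct closures, since an orbit is open and dense in its closure. Hence \(\MM \in \dH\) is maximal if and only if no pencil in \(\dH\) with a different KCF can be reached from \(\MM\) by such a sequence. The proof has two halves: every pencil of the form \eqref{eq_max_dh} is maximal, and every other pencil in \(\dH\) is not. The same scheme with \eqref{eq_max_dhi} and \(\dHI\) gives part~\ref{max_el_ii}.

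\emph{Pencils of the form \eqref{eq_max_dh} are maximal.} Such a pencil \(\MM\) is regular. Operations \ref{R1}--\ref{R4} and \ref{R6} all require a singular block on the left-hand side, and \ref{R5} maps regular pencils to regular pencils. So along any sequence starting at \(\MM\), only \ref{R5} can ever be applied. Operation \ref{R5} preserves the eigenvalues and their algebraic multiplicities, and for each eigenvalue it replaces the partition of partial multiplicities by one strictly larger in the dominance order. We then check each eigenvalue of a reachable pencil \(\MN\) that still lies in \(\dH\).
\begin{itemize}
\item For \(\mu_i\) in the open left half-plane, the partition \((s_i)\) is a single part and cannot be increased.
\item For nonzero \(\nu_j\) on the imaginary axis, \(\dH\) forces semisimplicity, so any application of \ref{R5} leaves the class.
\item For \(0\) and \(\infty\), parts are bounded by \(2\). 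The partition \((2,\dots,2,1^{d})\) with \(d \le 1\) is the top element of the dominance order among partitions with parts at most \(2\) of the given total.
\end{itemize}
Hence \(\MN\) has the same KCF as \(\MM\). For \(\dHI\) the argument is identical, except that \(0\) must be semisimple, which is why \(n_0\) is arbitrary.

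\emph{Every other pencil \(\MM \in \dH\) is not maximal.} We exhibit one operation producing \(\MN \in \dH\) with a different KCF, so that \(\MM \prec_c \MN\) strictly.
\begin{itemize}
\item Suppose \(\MM\) is singular. Since \(\MM\) is square, it has equally many right and left singular blocks. By Theorem~\ref{th_2classes} the left blocks are \(\LL_0^T\) and the right ones are \(\LL_p\) with \(p \le 1\). Apply \ref{R6} in the form \(\LL_p \oplus \LL_0^T \rightsquigarrow \JJ_{p+1}^{\mu}\), where \(\mu\) lies in the open left half-plane and is not an eigenvalue of \(\MM\). This stays in \(\dH\), because eigenvalues in the open left half-plane have no restriction on multiplicities. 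In the \(\dHI\) case \(p = 0\) and we obtain \(\JJ_1^\mu\).
\item Suppose \(\MM\) is regular but not of the form \eqref{eq_max_dh}. Then one of the following holds, and we apply \ref{R5} to two blocks of the same eigenvalue.
\begin{itemize}
\item Some \(\mu\) in the open left half-plane has two blocks \(\JJ_j^\mu \oplus \JJ_k^\mu\) with \(j \le k\). We replace them by \(\JJ_{j-1}^\mu \oplus \JJ_{k+1}^\mu\), and the result is still in \(\dH\).
\item The eigenvalue \(0\) or \(\infty\) has two blocks of size \(1\). We replace them by \(\JJ_0 \oplus \JJ_2\), which respects the bound \(2\) on partial multiplicities at \(0\) and the index bound at \(\infty\). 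In the \(\dHI\) case this step is done only at \(\infty\).
\end{itemize}
\end{itemize}

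The main obstacle is the first half: ruling out that a chain of operations might leave \(\dH\) and then return to a different member of \(\dH\). This is handled by the observation that, from a regular pencil, the only admissible operation is \ref{R5}, so we stay regular with fixed eigenvalues throughout. The dominance-order argument at \(0\) and \(\infty\) then has to be stated carefully.
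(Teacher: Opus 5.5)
Your proof is correct and follows essentially the same route as the paper. For necessity, you apply operation \ref{R6} to a pair $\LL_p \oplus \LL_0^T$ and operation \ref{R5} to repeated blocks. For sufficiency, you observe that only \ref{R5} can act on a regular pencil, and your dominance-order argument makes explicit something the paper's sufficiency step leaves implicit: a chain of \ref{R5} moves that leaves $\dH$ (or $\dHI$) can never return to the class.
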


\begin{proof}
\begin{enumerate}[label=\rm(\roman*)]
    \item Let \(\MM\) be an arbitrary maximal element of the partial order \(\prec_c\) in \(\CC[\lambda]^{n \times n}\) within class \(\dH\). Without loss of generality, assume \(\MM\) is in the KCF. Since pencil \(\MM\) is square, it contains the same number of left and right singular blocks. Thus, if \(\MM\) contained some singular block, there would be at least two blocks, \(\LL_k\) and \(\LL_0^T\), with \(0 \leq k \leq 1\). We would then have, by applying operation \ref{R6} from Theorem \ref{th_pokrzywa},
    \[
    \MM = \hat{\MM} \oplus \LL_k \oplus \LL_0^T \prec_c \hat{\MM} \oplus \JJ_{k+1}^\mu,
    \]
    for an appropriate subpencil \(\hat{\MM}\) and an arbitrary eigenvalue \(\mu\) lying in the open left half-plane. This contradicts the maximality of \(\MM\), as the pencil on the right-hand side also belongs to class \(\dH\) by Theorem \ref{th_2classes}.
    The restriction on the number of Jordan blocks associated with the eigenvalue zero follows from the fact that if there were at least two blocks \(\JJ_1^0\), we would have for an appropriate subpencil \(\hat{\MM}\),
    \[
    \MM = \hat{\MM} \oplus \JJ_1^0 \oplus \JJ_1^0 \prec_c \hat{\MM} \oplus \JJ_2^0,
    \]
    by applying operation \ref{R5}. Again, this contradicts the maximality of \(\MM\) within class \(\dH\). If there were at least two blocks \(\JJ_1^\infty\), or two blocks \(\JJ_{s_{i}}^{\mu_i}\), \(\JJ_{\hat{s}_{i}}^{\mu_i}\) associated with a fixed eigenvalue \(\mu_i\) lying in the open left half-plane, \(s_i, \hat{s}_i \geq 1\), we can apply the analogous reasoning.

    For the sufficiency part of the proof, let \(\PP\) be a KCF satisfying \eqref{eq_max_dh}. Notice that if \(\PP\) does not contain at least two Jordan blocks associated with the same eigenvalue, we cannot apply to it any operation from Theorem \ref{th_pokrzywa}, therefore \(\PP\) is maximal. 
    If \(\PP\) does contain two blocks associated with the same eigenvalue, we can apply operation \ref{R5} and obtain pencil \(\PP'\) such that \(\PP \prec_c \PP'\). However, observe that any pencil \(\PP'\) obtained from \(\PP\) in this way does not belong to class \(\dH\) by Theorem \ref{th_2classes}.
    
    \item The proof is analogous to case \ref{max_el_i}.
    \end{enumerate}
\end{proof}

Notice that the KCFs of the maximal elements described in \eqref{eq_max_dh} and \eqref{eq_max_dhi} differ only in the Jordan blocks corresponding to the eigenvalue zero.

\begin{observation}
    The maximal elements of the partial order \(\prec_c\) within class \(\dHI\) are maximal within class \(\dH\) if and only if their KCF contains at most one Jordan block of size one associated with the eigenvalue zero, \(\JJ_1^0\).
\end{observation}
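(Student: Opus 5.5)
The observation follows by comparing the two characterisations in Theorem \ref{th_max_el}. Let \(\PP\) be a maximal element of \(\prec_c\) within \(\dHI\). By Theorem \ref{th_max_el}\ref{max_el_ii}, its KCF is of the form \eqref{eq_max_dhi}, with some \(n_0 \geq 0\) blocks \(\JJ_1^0\). Since \(\dHI \subset \dH\), \(\PP\) lies in \(\dH\), so it is maximal there exactly when its KCF has the form \eqref{eq_max_dh}. The plan is to match the two forms block by block.

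For sufficiency, assume \(n_0 \leq 1\). The blocks \(\JJ_{s_i}^{\mu_i}\) with \(\mathrm{Re}\,\mu_i<0\), the semisimple blocks \(\JJ_1^{\nu_j}\) with nonzero imaginary \(\nu_j\), the \(d_\infty \leq 1\) blocks \(\JJ_1^\infty\), and the \(w\) blocks \(\JJ_2^\infty\) appear in \eqref{eq_max_dhi} with exactly the same constraints as in \eqref{eq_max_dh}. Setting \(d_0 := n_0 \in \{0,1\}\) and \(v := 0\) puts the KCF of \(\PP\) in the form \eqref{eq_max_dh}, and the size conditions coincide. Hence \(\PP\) is maximal in \(\dH\) by Theorem \ref{th_max_el}\ref{max_el_i}.

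For necessity, assume \(n_0 \geq 2\). Then, as in the proof of Theorem \ref{th_max_el}, write \(\PP = \hat{\PP} \oplus \JJ_1^0 \oplus \JJ_1^0\) and apply operation \ref{R5} of Theorem \ref{th_pokrzywa} to get \(\PP \prec_c \PP' := \hat{\PP} \oplus \JJ_2^0\). The pencil \(\PP'\) lies in \(\dH\) by Theorem \ref{th_2classes}\ref{dH_class}, since partial multiplicities two at zero are allowed and nothing else changes. Moreover \(\PP' \neq \PP\) as KCFs, so the orbit closures differ, and \(\PP\) is not maximal in \(\dH\). Equivalently, one could note directly that a KCF with \(n_0 \geq 2\) violates \(d_0 \leq 1\) in \eqref{eq_max_dh}, because the only blocks at zero allowed there are at most one \(\JJ_1^0\) together with blocks \(\JJ_2^0\).

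No step is a real obstacle. The only point needing care is that \(\PP'\) must be strictly above \(\PP\), which holds because distinct KCFs give distinct orbits. The statement should also be read as saying that, for an element maximal in \(\dHI\), having at most one block \(\JJ_1^0\) is equivalent to being maximal in \(\dH\).
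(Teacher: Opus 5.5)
Your proposal is correct and takes the same route as the paper. The paper derives the observation directly from Theorem \ref{th_max_el}, noting that the forms \eqref{eq_max_dh} and \eqref{eq_max_dhi} differ only in the blocks at the eigenvalue zero: \(n_0 \leq 1\) corresponds to \(d_0 = n_0\), \(v = 0\), while \(n_0 \geq 2\) is excluded by \(d_0 \leq 1\) and uniqueness of the KCF. Your extra operation-\ref{R5} argument for necessity is a harmless equivalent of that direct comparison.
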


The next proposition summarises the relationship between the maximal elements within the two classes and the maximal elements in the whole set of complex \(n \times n\) matrix pencils.

\begin{proposition}
    The maximal elements of the partial order \(\prec_c\) within class \(\dH\) (or class \(\dHI\))
    are also maximal within the whole set of complex \(n \times n\) matrix pencils, \(\CC[\lambda]^{n \times n}\), if and only if their KCF is of the form
    \begin{equation} \label{max_el_CC}
    \JJ_{k_1}^{\nu_1} \oplus \ldots \oplus \JJ_{k_t}^{\nu_t},
    \end{equation}
    with \(\nu_i \neq \nu_j\) for \(i \neq j\), and \(\sum_{i = 1}^t k_t = n\).  
\end{proposition}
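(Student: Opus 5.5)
First I characterise the maximal elements of \(\prec_c\) in the whole of \(\CC[\lambda]^{n \times n}\) using Theorem \ref{th_pokrzywa}, and then intersect this description with the lists \eqref{eq_max_dh} and \eqref{eq_max_dhi} from Theorem \ref{th_max_el}.

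I will show that a square pencil \(\PP\) in KCF is maximal in \(\CC[\lambda]^{n\times n}\) if and only if it is regular and each eigenvalue in \(\overline{\CC}\) has exactly one Jordan block, which is exactly the form \eqref{max_el_CC}.

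\begin{itemize}
\item \emph{Singular \(\PP\) is not maximal.} A square singular pencil contains at least one \(\LL_p\) and one \(\LL_q^T\). Operation \ref{R6} replaces them by \(\JJ_{p+q+1}^\mu\), which gives a strictly more generic pencil. It is strictly more generic because the normal rank increases, so the new pencil is not strictly equivalent to \(\PP\).
\item \emph{Two blocks for one eigenvalue is not maximal.} If \(\PP\) is regular and has two blocks \(\JJ_j^\mu\), \(\JJ_k^\mu\) with \(j \le k\), operation \ref{R5} gives \(\JJ_{j-1}^\mu \oplus \JJ_{k+1}^\mu\). This pencil is more generic and has a different KCF, hence a different orbit.
\item \emph{Form \eqref{max_el_CC} is maximal.} Suppose \(\PP\) has the form \eqref{max_el_CC} and \(\PP \prec_c \MM\). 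Then the KCF of \(\MM\) is obtained from that of \(\PP\) by operations \ref{R1}--\ref{R6}. Operations \ref{R1}--\ref{R4} and \ref{R6} require singular blocks as input, and \(\PP\) has none. Operation \ref{R5} requires two blocks with a common eigenvalue, and \(\PP\) has none. Hence no operation applies, so \(\MM\) is strictly equivalent to \(\PP\).
\end{itemize}

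Now take a maximal element \(\MM\) within \(\dH\), with KCF as in \eqref{eq_max_dh}.

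For the forward direction, suppose \(\MM\) is globally maximal, so it has the form \eqref{max_el_CC}. Every pencil of that form which lies in \(\dH\) is regular. It has at most one block per eigenvalue, so \(n_j = 1\) and \(d_0 + v \le 1\) and \(d_\infty + w \le 1\). Such a pencil is indeed of the form \eqref{eq_max_dh}, so nothing more is needed here.

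For the converse, suppose \(\MM\) is of the form \eqref{eq_max_dh} and also of the form \eqref{max_el_CC}. Then it is regular with distinct eigenvalues across its blocks, so it is maximal in \(\CC[\lambda]^{n\times n}\) by the characterisation above.

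Thus the statement is just the conjunction of the two descriptions. The \(\dHI\) case is identical, using \eqref{eq_max_dhi} in place of \eqref{eq_max_dh}.

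The only delicate point is strictness in the steps showing non-maximality. An operation of Theorem \ref{th_pokrzywa} must genuinely change the orbit, rather than merely produce an element of the same closure. I handle this by noting that the resulting KCF differs from the original, in the \ref{R5} step by the sizes of the blocks and in the \ref{R6} step by the normal rank. Since the KCF is an invariant of the orbit, the new pencil lies in a different orbit.
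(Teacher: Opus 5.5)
Your proof is correct and follows essentially the same route as the paper: operation \ref{R5} rules out two blocks with a common eigenvalue, and maximality of the form \eqref{max_el_CC} follows because none of the operations \ref{R1}--\ref{R6} applies to it. Your extra \ref{R6} step for singular pencils is valid but redundant here, since Theorem \ref{th_max_el} already guarantees that maximal elements within \(\dH\) and \(\dHI\) are regular, which the paper uses implicitly.
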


\begin{proof}
    Let \(\MM\) be the KCF of an arbitrary maximal element of the partial order \(\prec_c\) within class \(\dH\). The same argument applies to class \(\dHI\), replacing \(\dH\) by \(\dHI\).

    First, assume that \(\MM\) is not of the form \eqref{max_el_CC}, i.e. it contains at least two Jordan blocks associated with some eigenvalue \(\nu\). Assume \(\MM\) contains blocks \(J_p^\nu\) and \(\JJ_q^\nu\), where \(1 \leq p \leq q\). Then, by applying operation \ref{R5}, we obtain
    \[
    \MM = \hat{\MM} \oplus \JJ_p^\nu \oplus \JJ_q^\nu \prec_c \hat{\MM} \oplus \JJ_{p-1}^\nu \oplus \JJ_{q+1}^\nu,
    \]
    for an appropriate subpencil \(\hat{\MM}\). Thus, \(\MM\) is not maximal outside class \(\dH\).

    Let us now assume that \(\MM\) is of the form \eqref{max_el_CC}. Notice that in this case, we cannot apply to \(\MM\) any of the operations \ref{R1}-\ref{R6}. Therefore, by Theorem \ref{th_pokrzywa} there does not exist a pencil \(\MM' \in \CC[\lambda]^{n \times n}\) such that \(\MM \prec_c \MM'\), hence \(\MM\) is maximal in the whole set of complex \(n \times n\) matrix pencils.
\end{proof}

\subsection{Properties of the orbit closures}

Let us now consider the closures of the strict equivalence orbits of dH pencils. We begin with the analysis of the orbit stratification in the simple case of \(2 \times 2\) matrix pencils, depicted in Figure \ref{fig_strat}.

\begin{figure}
    \centering
    
        \begin{tikzpicture}

  \matrix (m) [matrix of nodes, row sep=0.25cm, column sep=2cm, nodes in empty cells]
  {
    & & &[+1cm] {\color{purple} \(\JJ_2^{\lambda_r}\)} \; \; \; \; \; \\
    & & & {\color{purple} \(\JJ_1^{\lambda_r} \oplus \JJ_1^{\hat{\lambda}_r}\)} \\
    & & {\color{purple} \(2 \JJ_1^{\lambda_r}\)} & {\color{purple} \(\JJ_1^{\lambda_r} \oplus \JJ_1^{\lambda_i}\)} \\
    & & & {\color{purple} \(\JJ_1^{\lambda_r} \oplus \JJ_1^0\)} \\
    & {\color{purple} \(\LL_0 \oplus \LL_0^T \oplus \JJ_1^{\lambda_r}\)} & \(2 \JJ_1^{\lambda_i}\) & {\color{purple} \(\JJ_1^{\lambda_r} \oplus \JJ_1^{\lambda_l}\)} \\
    & & & {\color{purple} \(\JJ_1^{\lambda_r} \oplus \JJ_1^\infty\)} \\
    & \(\LL_0 \oplus \LL_0^T \oplus \JJ_1^{\lambda_i}\) & {\color{purple} \(\LL_0 \oplus \LL_1^T\)} & {\color{purple} \(\JJ_2^{\lambda_i}\)} \; \; \; \; \; \\
    & & & \(\JJ_1^{\lambda_i} \oplus \JJ_1^{\hat{\lambda}_i}\) \\
    \(2\LL_0 \oplus 2\LL_0^T\) & \(\LL_0 \oplus \LL_0^T \oplus \JJ_1^0\) & {\color{blue} \(\LL_1 \oplus \LL_0^T\)} & \(\JJ_1^{\lambda_i} \oplus \JJ_1^0\)  \\
    & & & \(\JJ_1^{\lambda_i} \oplus \JJ_1^{\lambda_l}\) \\
    & \(\LL_0 \oplus \LL_0^T \oplus \JJ_1^{\lambda_l}\) & \(2 \JJ_1^0\) & \(\JJ_1^{\lambda_i} \oplus \JJ_1^\infty\) \\
    & & & {\color{blue} \(\JJ_2^0\)} \; \; \; \; \; \\
    & \(\LL_0 \oplus \LL_0^T \oplus \JJ_1^\infty\) & \(2 \JJ_1^{\lambda_l}\) & \(\JJ_1^0 \oplus \JJ_1^{\lambda_l}\) \\
    & & & \(\JJ_1^0 \oplus \JJ_1^\infty\) \\
    & & \(2 \JJ_1^\infty\) & \(\JJ_2^{\lambda_l}\) \; \; \; \; \; \\
    & & & \(\JJ_1^{\lambda_l} \oplus \JJ_1^{\hat{\lambda}_l}\) \\
    & & & \(\JJ_1^{\lambda_l} \oplus \JJ_1^\infty\) \\
    & & & \(\JJ_2^\infty\) \; \; \; \; \; \\
  };

  \draw[->] (m-9-1.east) -- (m-5-2.west);
  \draw[->] (m-9-1.east) -- (m-7-2.west);
  \draw[->] (m-9-1.east) -- (m-9-2.west);
  \draw[->] (m-9-1.east) -- (m-11-2.west);
  \draw[->] (m-9-1.east) -- (m-13-2.west);
  
  \draw[->] (m-5-2.east) -- (m-3-3.west);
  \draw[->] (m-5-2.east) -- (m-7-3.west);
  \draw[->] (m-5-2.east) -- (m-9-3.west);
  
  \draw[->] (m-7-2.east) -- (m-5-3.west);
  \draw[->] (m-7-2.east) -- (m-7-3.west);
  \draw[->] (m-7-2.east) -- (m-9-3.west);
  
  \draw[->] (m-9-2.east) -- (m-7-3.west);
  \draw[->] (m-9-2.east) -- (m-9-3.west);
  \draw[->] (m-9-2.east) -- (m-11-3.west);
  
  \draw[->] (m-11-2.east) -- (m-7-3.west);
  \draw[->] (m-11-2.east) -- (m-9-3.west);
  \draw[->] (m-11-2.east) -- (m-13-3.west);
  
  \draw[->] (m-13-2.east) -- (m-7-3.west);
  \draw[->] (m-13-2.east) -- (m-9-3.west);
  \draw[->] (m-13-2.east) -- (m-15-3.west);
  
  \draw[->] (m-3-3.east) -- (m-1-4.west);
  
  \draw[->] (m-5-3.east) -- (m-7-4.west);
  
  \draw[->] (m-7-3.east) -- (m-1-4.west);
  \draw[->] (m-7-3.east) -- (m-2-4.west);
  \draw[->] (m-7-3.east) -- (m-3-4.west);
  \draw[->] (m-7-3.east) -- (m-4-4.west);
  \draw[->] (m-7-3.east) -- (m-5-4.west);
  \draw[->] (m-7-3.east) -- (m-6-4.west);
  \draw[->] (m-7-3.east) -- (m-7-4.west);
  \draw[->] (m-7-3.east) -- (m-8-4.west);
  \draw[->] (m-7-3.east) -- (m-9-4.west);
  \draw[->] (m-7-3.east) -- (m-10-4.west);
  \draw[->] (m-7-3.east) -- (m-11-4.west);
  \draw[->] (m-7-3.east) -- (m-12-4.west);
  \draw[->] (m-7-3.east) -- (m-13-4.west);
  \draw[->] (m-7-3.east) -- (m-14-4.west);
  \draw[->] (m-7-3.east) -- (m-15-4.west);
  \draw[->] (m-7-3.east) -- (m-16-4.west);
  \draw[->] (m-7-3.east) -- (m-17-4.west);
  \draw[->] (m-7-3.east) -- (m-18-4.west);

  \draw[->] (m-9-3.east) -- (m-1-4.west);
  \draw[->] (m-9-3.east) -- (m-2-4.west);
  \draw[->] (m-9-3.east) -- (m-3-4.west);
  \draw[->] (m-9-3.east) -- (m-4-4.west);
  \draw[->] (m-9-3.east) -- (m-5-4.west);
  \draw[->] (m-9-3.east) -- (m-6-4.west);
  \draw[->] (m-9-3.east) -- (m-7-4.west);
  \draw[->] (m-9-3.east) -- (m-8-4.west);
  \draw[->] (m-9-3.east) -- (m-9-4.west);
  \draw[->] (m-9-3.east) -- (m-10-4.west);
  \draw[->] (m-9-3.east) -- (m-11-4.west);
  \draw[->] (m-9-3.east) -- (m-12-4.west);
  \draw[->] (m-9-3.east) -- (m-13-4.west);
  \draw[->] (m-9-3.east) -- (m-14-4.west);
  \draw[->] (m-9-3.east) -- (m-15-4.west);
  \draw[->] (m-9-3.east) -- (m-16-4.west);
  \draw[->] (m-9-3.east) -- (m-17-4.west);
  \draw[->] (m-9-3.east) -- (m-18-4.west);
  
  \draw[->] (m-11-3.east) -- (m-12-4.west);

  \draw[->] (m-13-3.east) -- (m-15-4.west);
  
  \draw[->] (m-15-3.east) -- (m-18-4.west);

        \end{tikzpicture}
        
    \caption{Stratification of the strict equivalence orbits in \(\CC[\lambda]^{2 \times 2}\). Orbits are represented by their KCFs. An arrow from \(\MN\) to \(\MM\) denotes the relation \(\MN \in \CO(\MM)\). Colours indicate classes of pencils: pencils in class \(\dHI\) (\(\dH\) with \(Q = I\)) are shown in black, pencils in class \(\dH\) with \(Q \neq I\) in blue, and pencils outside both classes in purple. \\
    For brevity, an arbitrary eigenvalue in the open right half-plane, open left half-plane, and on the imaginary axis excluding zero is denoted by \(\lambda_r\), \(\lambda_l\), and \(\lambda_i\), respectively. If a pencil has two distinct eigenvalues belonging to the same set, the second eigenvalue is denoted by adding a hat; e.g. \(\hat{\lambda}_r\) denotes an eigenvalue in the open right half-plane satisfying \(\lambda_r \neq \hat{\lambda}_r\).
    }
    \label{fig_strat}

\end{figure}

This example allows us to observe that between two orbits of class \(\dH\) (or \(\dHI\)) there may exist an orbit lying outside both classes, but not necessarily. 
To analyse the strict equivalence orbit stratification in the general case of \(n \times n\) matrix pencils, one may employ \cite[Th. II]{futorny}, which characterises the covering relations between orbits; see also \cite[Th. 2.4]{das}. This is left for future work.

The next theorem provides a characterisation, in terms of the Kronecker canonical form, of when the orbit closure of an \(n \times n\) matrix pencil belonging to class \(\dH\) (or class \(\dHI\)) does not contain any orbits lying outside of this class.

\begin{theorem} \label{th_nondegen}
\begin{enumerate}[label=\rm(\roman*)]
    \item Let \(\MM \in \CC[\lambda]^{n \times n}\) be a matrix pencil belonging to class \(\dH\). Then we have \(\CO(\MM) \subset \dH\) if and only if the KCF of pencil \(\MM\) is of the form, up to permutation of the blocks,
    \begin{equation} \label{nondegen_form}
        \underbrace{\LL_0 \oplus \ldots \oplus \LL_0}_{l \text{ blocks}}
        \oplus
        \underbrace{\LL_0^T \oplus \ldots \oplus \LL_0^T}_{l \text{ blocks}}
        \oplus
        \underbrace{\JJ_1^\mu \oplus \ldots \oplus \JJ_1^\mu}_{j \text{ blocks}},
    \end{equation}
    where \(l, j \geq 0\) are such that \(l + j = n\), and eigenvalue \(\mu\) is either satisfying \(\mathrm{Re} (\mu) \leq 0\) or is equal to \(\infty\).

    \item Let \(\MM \in \CC[\lambda]^{n \times n}\) be a matrix pencil belonging to class \(\dHI\). Then we have \(\CO(\MM) \subset \dHI\) if and only if the KCF of pencil \(\MM\) is of the form \eqref{nondegen_form}, up to permutation of the blocks.
\end{enumerate}
    
\end{theorem}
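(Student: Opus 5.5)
The plan is to work entirely with Kronecker canonical forms and to use Theorem~\ref{th_pokrzywa} in reverse. By that theorem, \(\PP \in \CO(\MM)\) if and only if the KCF of \(\PP\) is obtained from the KCF of \(\MM\) by finitely many inverse operations \ref{R1}--\ref{R6}, each replacing the right-hand side of a rule by its left-hand side. Membership in \(\dH\) and \(\dHI\) will always be read off from Theorem~\ref{th_2classes}. I will prove both parts at once, since the blocks appearing in \eqref{nondegen_form} are admissible for both classes.

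\emph{Sufficiency.} Let \(\MM\) have the form \eqref{nondegen_form}. I will check which inverse operations can be applied to such a KCF.
\begin{itemize}
\item The inverses of \ref{R1}--\ref{R4} require a singular block of size at least \(1\) on the right-hand side of the rule, and there is none.
\item The inverse of \ref{R5} would require blocks \(\JJ_{j-1}^\mu \oplus \JJ_{k+1}^\mu\) with \(1 \le j \le k\), so their sizes differ by at least \(2\). Among copies of \(\JJ_1^\mu\) this is impossible, even allowing \(\JJ_0^\mu\).
\item The inverse of \ref{R6} replaces a collection of Jordan blocks with pairwise distinct eigenvalues by \(\LL_p \oplus \LL_q^T\), where \(p+q+1\) is the total size. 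Since only one eigenvalue occurs, the collection is a single \(\JJ_1^\mu\), so \(p=q=0\) and the move is \(\JJ_1^\mu \rightsquigarrow \LL_0 \oplus \LL_0^T\).
\end{itemize}
The only available move therefore produces again a KCF of the form \eqref{nondegen_form}, with \(l\) increased by one and \(j\) decreased by one. By induction on the length of the sequence, every \(\PP \in \CO(\MM)\) has such a KCF. Such a pencil belongs to \(\dHI \subset \dH\) by Theorem~\ref{th_2classes}, since \(\mu\) is in the closed left half-plane or infinite, is semisimple, and all minimal indices are zero.

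\emph{Necessity.} Suppose \(\MM \in \dH\) is not of the form \eqref{nondegen_form}. I will exhibit a single inverse move producing a pencil outside \(\dH\); it then lies outside \(\dHI\) as well, which gives part (ii). By Theorem~\ref{th_2classes}, the singular blocks of \(\MM\) are copies of \(\LL_0^T\), \(\LL_0\) and \(\LL_1\). There are three cases.
\begin{enumerate}
\item \(\MM\) contains \(\LL_1\). This can only happen in \(\dH\). Apply \ref{R3} backwards: \(\LL_1 \oplus \JJ_0^\nu\) is obtained from \(\LL_0 \oplus \JJ_1^\nu\) with \(\mathrm{Re}(\nu) > 0\). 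The resulting pencil has an eigenvalue in the open right half-plane.
\item The regular part contains a block \(\JJ_k^\mu\) with \(k \ge 2\). Use \ref{R6} backwards to replace it by \(\LL_0 \oplus \LL_{k-1}^T\). This creates a left minimal index \(k-1 \ge 1\), which is forbidden.
\item All regular blocks have size \(1\) but two distinct eigenvalues \(\mu_1 \neq \mu_2\) occur. Replace \(\JJ_1^{\mu_1} \oplus \JJ_1^{\mu_2}\) by \(\LL_0 \oplus \LL_1^T\) via \ref{R6}. This is allowed since the eigenvalues are distinct and \(0+1+1 = 2\), and again it produces a left minimal index \(1\).
\end{enumerate}
If none of these cases occurs, the singular part is \(l\LL_0 \oplus l\LL_0^T\), since a square pencil has equally many left and right singular blocks, and the regular part is \(j\JJ_1^\mu\). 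Membership of \(\MM\) in \(\dH\) then forces \(\mathrm{Re}(\mu) \le 0\) or \(\mu = \infty\), so \(\MM\) is of the form \eqref{nondegen_form}, a contradiction.

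The main point requiring care is the case analysis in the necessity part: the three moves must exhaust all deviations from \eqref{nondegen_form}, and each must respect the constraints of Theorem~\ref{th_pokrzywa}, namely distinct eigenvalues and the size count in \ref{R6}. The sufficiency part is a routine inspection of the six rules.
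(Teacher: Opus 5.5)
Your proposal is correct and follows essentially the same route as the paper. Sufficiency holds because the only operation of Theorem~\ref{th_pokrzywa} that can produce a KCF of the form \eqref{nondegen_form} is operation~\ref{R6} applied as \(\LL_0 \oplus \LL_0^T \rightsquigarrow \JJ_1^\mu\). Necessity uses the same three degenerations as the paper: operation~\ref{R6} for a Jordan block of size at least two or for two distinct eigenvalues, and operation~\ref{R3} to create an eigenvalue in the open right half-plane when \(\LL_1\) is present. Your induction on the length of the sequence and your check that the three cases exhaust all deviations from \eqref{nondegen_form} only make explicit what the paper leaves implicit.
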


\begin{proof}
    Without loss of generality, assume that pencil \(\MM\) is in the KCF. We prove both parts of the theorem simultaneously, as the same argument applies to the classes \(\dH\) and \(\dHI\).
    
    To prove the sufficiency part, let us assume that pencil \(\MM\) is of the form \eqref{nondegen_form}. If \(j = 0\), then \(\MM\) contains only singular blocks of size \(0\) and the closure of its orbit does not contain any other orbits, as it is the minimal element of the partial order \(\prec_c\) within the set of complex \(n \times n\) matrix pencils, \(\CC[\lambda]^{n \times n}\). When \(j > 0\), observe that the only way of obtaining \(\MM\) through one of the operations \ref{R1}-\ref{R6} from Theorem \ref{th_pokrzywa} is by applying operation \ref{R6} in the following way
        \[
        \hat{\MM} \oplus \LL_0 \oplus \LL_0^T \rightsquigarrow \hat{\MM} \oplus \JJ_1^\mu = \MM,
        \]
        where \(\hat{\MM}\) is an appropriate subpencil of \(\MM\). Therefore, all pencils \(\MN\) such that \(\MN \prec_c \MM\) have KCF of the form \eqref{nondegen_form} and, by Theorem \ref{th_2classes}, belong to class \(\dHI \subset \dH\).

        On the other hand, if \(\MM\) is not of the form \eqref{nondegen_form}, we will show that there exists a pencil \(\MN\) such that \(\MN \prec_c \MM\) and \(\MN\) does not belong to class \(\dH\) (nor to subclass \(\dHI\)). First, assume that \(\MM\) contains a Jordan block of size \(k > 1\), \(\JJ_k^\nu\), where eigenvalue \(\nu\) is satisfying the conditions of Theorem \ref{th_2classes} \ref{dH_class}. Then, by operation \ref{R6} from Theorem \ref{th_pokrzywa}, for an appropriate subpencil \(\hat{\MM}\) we have
        \[
        \MN = \hat{\MM} \oplus \LL_0 \oplus \LL_{k-1}^T \prec_c \hat{\MM} \oplus \JJ_k^\nu = \MM.
        \]
        By Theorem \ref{th_2classes}, we obtain \(\MN \notin \dH\), as it contains a left singular block of size \(k - 1 \geq 1\). 
        Next, assume \(\MM\) contains Jordan blocks associated with at least two distinct eigenvalues, \(\JJ_p^{\nu_1}\), \(\JJ_q^{\nu_2}\), where \(p, q \geq 1\) and \(\nu_1 \neq \nu_2\) are in accordance with Theorem \ref{th_2classes} \ref{dH_class}. In this case, again via operation \ref{R6}, for an appropriate subpencil \(\hat{\MM}\) we obtain
        \[
        \MN = \hat{\MM} \oplus \LL_0 \oplus \LL_{p+q-1}^T \prec_c \hat{\MM} \oplus \JJ_p^{\nu_1} \oplus \JJ_q^{\nu_2} = \MM,
        \]
        where \(p + q - 1 \geq 1\). 
        Lastly, if \(\MM\) contains at least one right singular block of size one, \(\LL_1\), then as a square matrix pencil it also contains a left singular block, \(\LL_0^T\). In that case, by operation \ref{R3} we have, for an appropriate subpencil \(\hat{\MM}\),
        \[
        \MN = \hat{\MM} \oplus \LL_0 \oplus \LL_0^T \oplus \JJ_1^1 \prec_c \hat{\MM} \oplus \LL_1 \oplus \LL_0^T = \MM.
        \] 
        Here, pencil \(\MN\) has an eigenvalue lying in the open right half-plane. By Theorem \ref{th_2classes}, we obtain that \(\MN \notin \dH\).
\end{proof}

The following theorem is a version, restricted to classes \(\dH\) and \(\dHI\), of the classical result stating that the closure of an orbit is the union of the orbit itself and orbits of strictly smaller dimension.

\begin{theorem} \label{th_closure}
    Let \(\MM \in \CC[\lambda]^{n \times n}\) be a matrix pencil. Then
    \begin{enumerate}[label=\rm(\roman*),itemsep=0pt]
        \item \label{th_closure_i}
            \begin{equation} \label{closure_eq}
            \CO(\MM) \cap \dH = \bigcup_{\substack{\MN \prec_c \MM \\ \MN \in \dH}} \OO(\MN).
            \end{equation}
        \item \label{th_closure_ii}
            \begin{equation} \label{closure_eq_dHI}
            \CO(\MM) \cap \dHI = \bigcup_{\substack{\MN \prec_c \MM \\ \MN \in \dHI}} \OO(\MN).
            \end{equation}
    \end{enumerate}
\end{theorem}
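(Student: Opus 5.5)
The proof is a double inclusion, and it uses only that \(\dH\) and \(\dHI\) are unions of strict equivalence orbits. This holds by their definition through Theorem \ref{th_2classes}: membership depends only on the KCF.

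For ``\(\subset\)'' in \eqref{closure_eq}, take \(\PP \in \CO(\MM) \cap \dH\) and set \(\MN := \PP\).
\begin{itemize}
\item Trivially \(\PP \in \OO(\MN)\).
\item \(\MN \in \dH\) by assumption.
\item From \(\PP \in \CO(\MM)\) and closedness of \(\CO(\MM)\), every pencil \(V\PP W\) lies in \(\CO(\MM)\). Indeed, if \(\PP = \lim V_k \MM W_k\), then \(V\PP W = \lim (VV_k)\MM(W_kW)\). Hence \(\OO(\PP) \subset \CO(\MM)\), and taking closures gives \(\CO(\PP) \subset \CO(\MM)\), i.e. \(\MN \prec_c \MM\).
\end{itemize}

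For ``\(\supset\)'', let \(\MN \prec_c \MM\) with \(\MN \in \dH\), and let \(\PP \in \OO(\MN)\).
\begin{itemize}
\item \(\PP \in \OO(\MN) \subset \CO(\MN) \subset \CO(\MM)\).
\item Since \(\PP\) is strictly equivalent to \(\MN\), it has the same KCF, so \(\PP \in \dH\) by the orbit-invariance noted above.
\end{itemize}
Thus \(\PP \in \CO(\MM) \cap \dH\).

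Part \ref{th_closure_ii} follows by the identical argument with \(\dHI\) in place of \(\dH\), using Theorem \ref{th_2classes} \ref{dHI_class} for orbit-invariance.

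There is no genuine obstacle here. The only point needing care is the explicit statement that \(\PP \in \CO(\MM)\) implies \(\CO(\PP)\subset\CO(\MM)\), that is, that the orbit closure is invariant under the group action. This follows from continuity of \((V,W)\mapsto V\PP W\), as shown above.

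I would also remark that the theorem's content, parallel to the classical statement, is that orbits of class \(\dH\) meeting \(\CO(\MM)\) are exactly those with \(\MN \prec_c \MM\). Combined with Theorem \ref{th_pokrzywa}, these are precisely the \(\dH\) pencils whose KCF transforms into that of \(\MM\) via operations \ref{R1}–\ref{R6}.
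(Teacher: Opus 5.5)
Your proof is correct and uses the same ingredients as the paper: orbit-invariance of $\dH$ and $\dHI$ via Theorem~\ref{th_2classes}, and the limit argument $V\MM_iW \to V\MN W$ showing that $\CO(\MM)$ is a union of orbits. The only difference is where that limit argument goes. You use it for the inclusion ``$\subset$'', where it is genuinely needed to get $\CO(\PP)\subset\CO(\MM)$. The paper instead spends it on ``$\supset$'', where $\OO(\MN)\subset\CO(\MN)\subset\CO(\MM)$ already suffices, and calls ``$\subset$'' straightforward.
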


\begin{proof}
    We present the proof of \ref{th_closure_i}. The proof of \ref{th_closure_ii} follows by replacing \(\dH\) by \(\dHI\). 
    
    Consider a matrix pencil \(\PP\) belonging to the union on the right-hand side of \eqref{closure_eq}, i.e. \(\PP \in \OO(\MN)\) for some pencil \(\MN\) from class \(\dH\) and such that \(\MN \prec_c \MM\). We will show that \(\PP\) belongs to the left-hand side of \eqref{closure_eq}. The converse implication is straightforward.

    Observe that \(\PP \in \dH\), as it is strictly equivalent to pencil \(\MN \in \dH\). Moreover, there exist nonsingular matrices \(V, W \in \CC^{n \times n}\) satisfying \(\PP = V \MN W \). As \(\MN \in \CO(\MM)\), let \(\{\MM_i\}_{i \in \NN} \subset \OO(\MM)\) be a sequence of matrix pencils strictly equivalent to pencil \(\MM\) and such that \(\MM_i \rightarrow \MN\) when \(i \rightarrow \infty\). Then matrix pencil \(V \MM_i W\) is strictly equivalent to \(\MM\) for all \(i \in \NN\), and with \(i \rightarrow \infty\) we have \(V \MM_i W \rightarrow V \MN W = \PP\). Therefore, \(\PP \in \CO(\MM)\), which completes the proof.
\end{proof}

Notice that if we take pencil \(\MM\) from class \(\dH\) (or class \(\dHI\)), but drop the intersection with \(\dH\) (\(\dHI\)) on the left-hand side of \eqref{closure_eq} (or \eqref{closure_eq_dHI}, respectively), the equality no longer holds. 

\begin{remark} 
    Let \(\MM \in \CC[\lambda]^{n \times n}\) be a matrix pencil belonging to class \(\dH\) \((\dHI)\). Then
    \begin{equation} \label{closure_eq_2}
    \CO(\MM) \supset \bigcup_{\substack{\MN \prec_c \MM \\ \MN \in \dH (\dHI)}} \OO(\MN).
    \end{equation}
    The fact that the other inclusion fails is a consequence of Theorem \ref{th_nondegen}.
    There is an equality in \eqref{closure_eq_2} if and only if the KCF of pencil \(\MM\) is of the form \eqref{nondegen_form}. 
\end{remark}

\begin{example}
    For a simple example of strict inclusion in \eqref{closure_eq_2} for both class \(\dH\) and class \(\dHI\), take \(\MM = \JJ_2^\infty\) and look at Figure \ref{fig_strat}. We have \(\PP \in \CO(\MM)\) for \(\PP = \LL_0 \oplus \LL_1^T\), which does not belong to class \(\dHI \subset \dH\). Therefore \(\PP\) does not belong to the right-hand side of \eqref{closure_eq_2} for neither of the two classes.
\end{example}

\section{Kronecker canonical form and properties of the pencil \(\lambda E - Q\)} \label{sec_EQ}

In this section we study pencils which are strictly equivalent to a pencil
\begin{equation} \label{eq_form_P}
    \PP = \lambda E - Q \in \CC[\lambda]^{m \times n}, \quad \text{with } \ E^*Q = Q^*E \geq 0.  
\end{equation}
Notice that here we consider possibly non-square and possibly singular matrix pencils. We extend the results presented in \cite{MMW_2018} and obtain a full characterisation of the Kronecker canonical form for pencils satisfying \eqref{eq_form_P}.

\begin{theorem} \label{th_EQ}
    A pencil \(\MM \in \CC[\lambda]^{m \times n}\) is strictly equivalent to a pencil of the form \eqref{eq_form_P}, if and only if the following conditions hold:
    \begin{enumerate}[label=\rm(\alph*), noitemsep, topsep=3pt] 
        \item The eigenvalues of \(\MM\) lie in \(\RR_{\geq0} \cup \{\infty\}\).
        \item All eigenvalues, including infinity, are semisimple.
        \item The right minimal indices are all zero (if there are any), the left minimal indices are arbitrary.
    \end{enumerate}
\end{theorem}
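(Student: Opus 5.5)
Both directions can be reduced to the Kronecker canonical form.

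\emph{Sufficiency.} Assume \(\MM\) satisfies (a)--(c). Since strict equivalence preserves the KCF, we may take \(\MM\) in its KCF. The structure \eqref{eq_form_P} is preserved under direct sums: if \(\PP_i = \lambda E_i - Q_i\) satisfy \(E_i^*Q_i = Q_i^*E_i \geq 0\), then so does their block-diagonal sum. It therefore suffices to realise each admissible block individually, up to strict equivalence.
\begin{itemize}
\item For \(\JJ_1^\mu\) with \(\mu \geq 0\), take \(E = 1\), \(Q = \mu\); then \(E^*Q = \mu \geq 0\).
\item For \(\JJ_1^\infty\), take \(E = 0\), \(Q = 1\) (or \(-1\)); then \(E^*Q = 0\).
\item For \(\LL_0\), the \(0 \times 1\) pencil, the products \(E^*Q\) are the \(1 \times 1\) zero matrix.
\item For \(\LL_k^T\), of size \((k+1)\times k\), write \(\LL_k^T = \lambda E_k - Q_k\), where \(E_k\) and \(Q_k\) are the identity \(I_k\) padded by a zero row (at the top, respectively bottom). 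Then \(E_k^* Q_k\) is the \(k\times k\) shift matrix, which is not Hermitian, so this naive realisation fails. Instead I would look for nonsingular \(V, W\) with \(VE_kW\) and \(VQ_kW\) satisfying the condition. A natural choice is to replace \(\lambda E - Q\) by a pencil whose columns span the same structure with \(E^*Q = I\) or \(E^*Q\) a positive diagonal. For example, take \(E = \begin{bmatrix} I_k \\ 0\end{bmatrix}\) and \(Q = \begin{bmatrix} A \\ b^*\end{bmatrix}\) with \(A\) Hermitian positive definite and \(b \neq 0\). Then \(E^*Q = A > 0\), and \(\lambda E - Q\) is a \((k+1)\times k\) pencil. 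Its KCF contains only left singular blocks exactly when \(\lambda I - A\) together with the row \(b^*\) has full column rank for every \(\lambda\) (and at infinity, which holds since \(E\) has full column rank). This is the observability of the pair \((A, b^*)\). Choosing \(A\) with \(k\) distinct positive eigenvalues and \(b\) having all nonzero components in the eigenbasis gives an observable pair. The only minimal index is then the observability index \(k\), so the pencil is strictly equivalent to \(\LL_k^T\).
\end{itemize}

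\emph{Necessity.} Let \(\PP = \lambda E - Q\) satisfy \eqref{eq_form_P}.
\begin{itemize}
\item \textbf{Eigenvalues.} Take an eigenvalue \(\mu\in\CC\) with a right eigenvector \(x\) not in the common nullspace, so \(Qx = \mu Ex\) with \(Ex \neq 0\). Then \(x^*E^*Qx = \mu \|Ex\|^2 \geq 0\), so \(\mu \in \RR_{\geq 0}\). A Kronecker-adapted choice of vectors is needed to guarantee \(Ex \neq 0\); this holds for genuine Jordan-block eigenvectors, because \(Ex = 0\) would force \(Qx = 0\), putting \(x\) in a right-singular direction.
\item \textbf{Semisimplicity.} Suppose a Jordan chain \(x_1, x_2\) exists, with \((Q-\mu E)x_1 = 0\) and \((Q-\mu E)x_2 = Ex_1\). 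Set \(H = E^*Q\). The identity \(H = H^* \geq 0\) combined with \(\mu\) real gives that \(E^*(Q - \mu E) = H - \mu E^*E\) is Hermitian. Using \(x_1^*E^*(Q-\mu E)x_2 = \|Ex_1\|^2\) and Hermitian symmetry, this equals \(\overline{x_2^*E^*(Q-\mu E)x_1} = 0\), a contradiction. For \(\mu = \infty\), apply the same argument with the roles of \(E\) and \(Q\) swapped, since the condition is symmetric. For \(\mu = 0\) the argument is the same as the general case.
\item \textbf{Right minimal indices.} If \(\LL_k\) with \(k \geq 1\) occurs, there is a polynomial null vector \(x(\lambda) = x_0 + \lambda x_1 + \dots\) with \(Qx_0 = 0\), \(Ex_{i-1} = Qx_i\), and \(Ex_k = 0\) with \(Ex_0 \neq 0\). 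Pairing gives \(x_0^*E^*Qx_1 = \|Ex_0\|^2 > 0\). On the other hand, Hermitian symmetry gives \(x_0^*E^*Qx_1 = x_0^*Q^*Ex_1 = (Qx_0)^*Ex_1 = 0\), a contradiction.
\end{itemize}

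I expect the main obstacle to be making these vector arguments rigorous in the presence of singular parts. This means choosing the chain vectors from the Kronecker-reduced coordinates, transporting the structure via \(V, W\) (since \(W^*E^*V^*VQW\) is no longer of the form \(E^*Q\)), and arguing directly on the original \(\PP\) using the reducing subspaces.
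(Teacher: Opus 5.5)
Your proof is correct, but it takes a different route from the paper.

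\textbf{How the paper argues.} Necessity is done by citation. It uses \cite[Th.~3.7]{MMW_2018} to bring the regular part into a structured diagonal form with blocks $\lambda p_i - q_i$, where $p_i, q_i \geq 0$. It then reads off the eigenvalues and their semisimplicity from these $1\times 1$ blocks. The vanishing of the right minimal indices is taken from \cite[Prop.~3.5]{MMW_2018}. For sufficiency, the paper realises $\JJ_1^\mu$ and $\JJ_1^\infty$ exactly as you do. It takes the left singular blocks from \cite[Cor.~3.9]{MMW_2018}, and handles the $\LL_0$ blocks by appending zero columns to another block; your direct sum with $0\times 1$ blocks amounts to the same thing.

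\textbf{How you argue.} You evaluate the Hermitian form $E^*Q$ on Kronecker chain vectors of the original pencil. You also build $\LL_k^T$ explicitly as $\lambda \left[\begin{smallmatrix} I_k \\ 0 \end{smallmatrix}\right] - \left[\begin{smallmatrix} A \\ b^* \end{smallmatrix}\right]$ with $A > 0$ and $(A,b^*)$ observable.

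\textbf{What each route buys.} Your proof is self-contained. It also shows that semisimplicity (including at $\infty$) and the absence of right minimal indices $\geq 1$ use only $E^*Q = Q^*E$; positivity enters only to upgrade $\mu \in \RR$ to $\mu \geq 0$. The paper's route is shorter. Through \cite[Th.~3.7]{MMW_2018} it also carries extra information: the regular part itself admits a structure-preserving diagonal form, which is more than its KCF.

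\textbf{The obstacle you anticipate does not arise.} Let $V(\lambda E - Q)W = \lambda E_K - Q_K$ be the KCF, let $y_i$ be the standard chain vectors of the relevant block, and set $x_i := W y_i$.
\begin{itemize}
\item The identity $(\lambda E - Q)W = V^{-1}(\lambda E_K - Q_K)$ carries every chain relation over to the $x_i$.
\item The needed non-vanishing ($Ex_1 \neq 0$ for Jordan and right singular blocks, $Qx_1 \neq 0$ for nilpotent blocks) follows from $Ex_1 = V^{-1}E_K y_1$ and $Qx_1 = V^{-1}Q_K y_1$, which can be read off the block.
\item The identity $E^*Q = Q^*E \geq 0$ is only ever applied to the original $E$ and $Q$.
\end{itemize}
So nothing about the structure has to be transported through $V$ and $W$.
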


\begin{proof}
    For the necessity part of the proof, from \cite[Th. 3.7]{MMW_2018} we obtain that the regular part of the KCF of \(\MM\), if it exists, is strictly equivalent to a diagonal pencil of the form
        \[
        \begin{bmatrix}
        \lambda p_1 - q_1 & & &  \\
        & \lambda p_2 - q_2 & & \\
        & & \ddots & \\
        & & & \lambda p_k - q_k
        \end{bmatrix},
        \]
        with \(p_i, q_i \in \RR_{\geq 0}\) and \(p_i^2 + q_i^2 = 1\) for all \(i \in \{1, \ldots, k\}\). Recall that all unspecified entries are zeros. 
        Therefore, to establish the regular part of the KCF of pencil \(\MM\), it is sufficient to consider the \(1 \times 1\) matrix pencils \(\MM_i := \lambda p_i - q_i\). If \(p_i = 0\), we have \(|q_i| = 1 \) and pencil \(\MM_i\) is strictly equivalent to the nilpotent block \(\JJ_1^\infty = \lambda 0 - 1\). If \(p_i \neq 0\), pencil \(\MM_i\) is strictly equivalent to \(\JJ_1^{q_i / p_i} = \lambda - \frac{q_i}{p_i}\), which is a Jordan block associated with the eigenvalue \(\frac{q_i}{p_i} \in \RR_{\geq 0}\). Hence, we obtain that the regular part of the KCF of pencil \(\MM\) can contain only \(1 \times 1\) Jordan blocks associated with finite real nonnegative eigenvalues and \(1 \times 1\) nilpotent blocks associated with the eigenvalue infinity. 
        
        As for the singular part of the KCF of pencil \(\MM\), by \cite[Prop. 3.5]{MMW_2018} we obtain that the right minimal indices are all zero (if there are any), whereas the left minimal indices can be arbitrary (fitting the dimension restrictions) as a consequence of \cite[Cor. 3.9]{MMW_2018}.
        
        To prove the sufficiency part, assume without loss of generality that pencil \(\MM\) is in the KCF. First, we consider separately each type of the regular blocks which may be present in \(\MM\). A \(1 \times 1\) Jordan block associated with an eigenvalue \(\nu \in \RR_{\geq 0}\), i.e. \(\JJ_1^\nu = \lambda - \nu\), is strictly equivalent to a pencil \(\lambda E - Q\) satisfying \(E^*Q = Q^*E \geq 0\) for \(E = 1\) and \(Q = \nu\).
        In the case of \(1 \times 1\) nilpotent block associated with the eigenvalue infinity, \(\JJ_1^\infty = \lambda0 - 1\), we may take \(E = 0\) and \(Q = 1\).
        
        Regarding the singular blocks, the case of the left singular blocks follows again from \cite[Cor. 3.9]{MMW_2018}. As for the right singular blocks, assume that the number of right singular blocks of size zero in pencil \(\MM\) is equal to \(r > 0\). Since \(\MM\) is of size \(m \times n\) with \(m,n \geq 1\), it cannot consist only of the right singular blocks of size \(0\), and must contain at least one block of another type. Let us denote this block by \(\KK\), so that \(\KK\) is either equal to \(\JJ_1^\mu\) with \(\mu \in \RR_{\geq 0} \cup \{\infty \}\), or to \(\LL^T_j\) with \(j \geq 0\). Let \(\hat{\MM}\) denote the subpencil of \(\MM\) consisting of the block \(\KK\) and \(r\) blocks \(\LL_0\), that is
        \[ \hat{\MM} := 
        \begin{bmatrix}
            \ \KK & 0 & 0 & \ldots & 0 \
        \end{bmatrix}
        \]
        where there are \(r\) zero columns. We have already observed that \(\KK\) is strictly equivalent to a pencil \(\lambda E - Q\) with \(E^*Q = Q^*E \geq 0\). Now, let us consider matrices \(\hat{E}\) and \(\hat{Q}\) of the form
        \[
        \hat{E} := 
        \begin{bmatrix}
            \ E & 0 & 0 & \ldots & 0 \
        \end{bmatrix},
        \hspace{20pt}
        \hat{Q} :=
        \begin{bmatrix}
            \ Q & 0 & 0 & \ldots & 0 \
        \end{bmatrix},
        \]
        where in each the zeros denote \(r\) zero columns. To conclude the proof, notice that pencil \(\hat{\MM}\) is strictly equivalent to \(\lambda \hat{E} - \hat{Q}\), and \(\hat{E}^*\hat{Q} = \hat{Q}^*\hat{E} \geq 0\).
\end{proof}

In Figure \ref{fig_strat_2} we present the stratification of the orbits in the set \(\CC[\lambda]^{2 \times 3}\) with respect to the strict equivalence classes of pencils of the form \eqref{eq_form_P}.

\begin{figure}[h]
    \centering
    \hspace{-0.5cm}
        \begin{tikzpicture}

    \matrix (m) [matrix of nodes, row sep=0.3cm, column sep=0.7cm, nodes in empty cells]
    {
    &[-0.4cm] & \; \(\LL_0 \oplus 2\JJ_1^\mu\) \; &[+0.3cm] \; {\color{purple} \(\LL_0 \oplus \JJ_2^\mu\)} \; & &[-0.3cm] \\
    & \(2\LL_0 \oplus \LL_0^T \oplus \JJ_1^\mu\) & \; \(2\LL_0 \oplus \LL_1^T\) \; & \(\LL_0 \oplus \JJ_1^\mu \oplus \JJ_1^{\hat{\mu}}\) & {\color{purple} \(\LL_1 \oplus \JJ_1^\mu\)} & \\
    \(3\LL_0 \oplus 2\LL_0^T \) & & {\color{purple} \(\LL_0 \oplus \LL_1 \oplus \LL_0^T\)} & {\color{purple} \(\LL_0 \oplus \JJ_1^\mu \oplus \JJ_1^\nu\)} & & {\color{purple} \(\LL_2\)} \\
    & {\color{purple} \(2\LL_0 \oplus \LL_0^T \oplus \JJ_1^\nu\)} & \; {\color{purple} \(\LL_0 \oplus 2\JJ_1^\nu\)} \; & {\color{purple} \(\LL_0 \oplus \JJ_1^\nu \oplus \JJ_1^{\hat{\nu}}\)} & {\color{purple} \(\LL_1 \oplus \JJ_1^\nu\)} & \\
    & & & \; {\color{purple} \(\LL_0 \oplus \JJ_2^\nu\)} \; & & \\
    };

    \draw[->] (m-3-1.east) -- (m-2-2.west);
    \draw[->] (m-3-1.east) -- (m-4-2.west);
  
    \draw[->] (m-2-2.east) -- (m-1-3.west);
    \draw[->] (m-2-2.east) -- (m-2-3.west);
    \draw[->] (m-2-2.east) -- (m-3-3.west);
  
    \draw[->] (m-4-2.east) -- (m-2-3.west);
    \draw[->] (m-4-2.east) -- (m-3-3.west);
    \draw[->] (m-4-2.east) -- (m-4-3.west);
  
    \draw[->] (m-1-3.east) -- (m-1-4.west);

    \draw[->] (m-2-3.east) -- (m-1-4.west);
    \draw[->] (m-2-3.east) -- (m-2-4.west);
    \draw[->] (m-2-3.east) -- (m-3-4.west);
    \draw[->] (m-2-3.east) -- (m-4-4.west);
    \draw[->] (m-2-3.east) -- (m-5-4.west);

    \draw[->] (m-3-3.east) -- (m-1-4.west);
    \draw[->] (m-3-3.east) -- (m-2-4.west);
    \draw[->] (m-3-3.east) -- (m-3-4.west);
    \draw[->] (m-3-3.east) -- (m-4-4.west);
    \draw[->] (m-3-3.east) -- (m-5-4.west);

    \draw[->] (m-4-3.east) -- (m-5-4.west);

    \draw[->] (m-1-4.east) -- (m-2-5.west);
    
    \draw[->] (m-2-4.east) -- (m-2-5.west);

    \draw[->] (m-3-4.east) -- (m-2-5.west);
    \draw[->] (m-3-4.east) -- (m-4-5.west);
    
    \draw[->] (m-4-4.east) -- (m-4-5.west);
    
    \draw[->] (m-5-4.east) -- (m-4-5.west);

    \draw[->] (m-2-5.east) -- (m-3-6.west);

    \draw[->] (m-4-5.east) -- (m-3-6.west);    

        \end{tikzpicture}

    \caption{Stratification of the strict equivalence orbits in \(\CC[\lambda]^{2 \times 3}\). Orbits are represented by their KCFs. An arrow from \(\KK\) to \(\MM\) denotes the relation \(\KK \in \CO(\MM)\). Black denotes orbits of pencils of the form \eqref{eq_form_P}.
    Arbitrary distinct eigenvalues in \(\RR_{\geq 0} \cup \{\infty\}\) are denoted by \(\mu\) and \(\hat{\mu}\), whereas \(\nu\) and \(\hat{\nu}\) denote arbitrary distinct eigenvalues in \(\CC \setminus \RR_{\geq 0}\).}
    \label{fig_strat_2}
\end{figure}

Notice that the maximal element of the stratification in Figure \ref{fig_strat_2}, with the KCF consisting of a single block \(\LL_2\), is not strictly equivalent to a pencil of the form \eqref{eq_form_P} by Theorem \ref{th_EQ}. However, in some cases we can obtain that property, as for example if we considered the set \(\CC[\lambda]^{3 \times 2}\), the maximal orbit would correspond to the KCF \(\LL_2^T\).  

In the next theorem we characterise, in terms of the Kronecker canonical form, when the orbit closure of a pencil \(\PP\) as in \eqref{eq_form_P} contains only orbits of pencils which also have the form \eqref{eq_form_P}. It can be proved analogously to Theorem \ref{th_nondegen}.

\begin{theorem}
    Let \(\MM \in \CC[\lambda]^{m \times n}\) be strictly equivalent to a pencil \(\PP\) as in \eqref{eq_form_P}. Then for every \(\MN \prec_c \MM\) there exists a pencil \(\hat{\PP}\) of the form \eqref{eq_form_P} strictly equivalent to \(\MN\), if and only if the KCF of pencil \(\MM\) is of the form, up to permutation of the blocks,
    \[
    \underbrace{\LL_0 \oplus \ldots \oplus \LL_0}_{k \text{ blocks}}
    \oplus
    \underbrace{\LL_0^T \oplus \ldots \oplus \LL_0^T}_{l \text{ blocks}}
    \oplus
    \underbrace{\JJ_1^\mu \oplus \ldots \oplus \JJ_1^\mu}_{j \text{ blocks}},
    \]    
    where \(k, l, j \geq 0\) are such that \(k + j = n\), \(l + j = m\), and \(\mu \in \RR_{\geq 0} \cup \{\infty\}\).
    
\end{theorem}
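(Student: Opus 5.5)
We follow the proof of Theorem \ref{th_nondegen}, with Theorem \ref{th_EQ} playing the role of Theorem \ref{th_2classes}. Assume without loss of generality that \(\MM\) is in the KCF. By Theorem \ref{th_EQ}, \(\MM\) then consists of blocks \(\LL_0\), left singular blocks \(\LL_j^T\) of arbitrary size, and blocks \(\JJ_1^\mu\) with \(\mu \in \RR_{\geq 0}\cup\{\infty\}\), where eigenvalues may repeat.

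\emph{Sufficiency.} Let \(\MM\) have the displayed form. If \(j=0\), then \(\MM = k\LL_0 \oplus l\LL_0^T\) is the minimal element of \(\prec_c\) in \(\CC[\lambda]^{m\times n}\): no operation \ref{R1}--\ref{R6} can produce it, since each of these creates either a singular block of positive size or a Jordan block. Hence \(\CO(\MM) = \OO(\MM)\).

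If \(j>0\), we check which operations of Theorem \ref{th_pokrzywa} can have \(\MM\) as their output.
\begin{enumerate}[label=(\arabic*)]
\item Operations \ref{R1}--\ref{R4} produce a singular block of size at least one, which \(\MM\) does not contain.
\item Operation \ref{R5} produces two blocks of the same eigenvalue differing in size by at least two (counting \(\JJ_0^\mu\)), which is impossible when all Jordan blocks have size one.
\item Operation \ref{R6} must produce Jordan blocks with pairwise distinct eigenvalues summing in size to \(p+q+1\). Since all blocks of \(\MM\) share the single eigenvalue \(\mu\), the output is exactly one \(\JJ_1^\mu\), so \(p=q=0\). This is the step \(\LL_0\oplus\LL_0^T \rightsquigarrow \JJ_1^\mu\).
\end{enumerate}
Each immediate predecessor of \(\MM\) is therefore again of the displayed form, with \(j\) decreased by one and \(k, l\) increased by one. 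By induction on \(j\), every \(\MN \prec_c \MM\) has the displayed form. By Theorem \ref{th_EQ}, each such \(\MN\) is strictly equivalent to a pencil of the form \eqref{eq_form_P}.

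\emph{Necessity.} Assume \(\MM\) is not of the displayed form. Then it contains either a left singular block \(\LL_s^T\) with \(s\ge1\), or Jordan blocks \(\JJ_1^{\nu_1}\) and \(\JJ_1^{\nu_2}\) with \(\nu_1\neq\nu_2\). In each case we exhibit \(\MN\prec_c\MM\) whose KCF violates Theorem \ref{th_EQ}, either through a right minimal index of positive size or through a non-semisimple eigenvalue or an eigenvalue outside \(\RR_{\geq0}\cup\{\infty\}\).

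\begin{enumerate}[label=(\roman*)]
\item \(\MM\) contains \(\JJ_1^{\nu_1}\oplus\JJ_1^{\nu_2}\) with \(\nu_1\neq\nu_2\). Operation \ref{R6} gives \(\hat\MM\oplus\LL_1\oplus\LL_0^T \rightsquigarrow \hat\MM\oplus\JJ_1^{\nu_1}\oplus\JJ_1^{\nu_2}\), and \(\MN=\hat\MM\oplus\LL_1\oplus\LL_0^T\) has a right minimal index one.
\item \(\MM\) contains \(\LL_s^T\) with \(s\ge1\). Operation \ref{R4} with \(j=s-1\), \(k=0\) and \(\mu=-1\) reads \(\LL_{s-1}^T\oplus\JJ_1^{-1}\rightsquigarrow\LL_s^T\). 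Hence \(\MN=\hat\MM\oplus\LL_{s-1}^T\oplus\JJ_1^{-1}\prec_c\MM\), and \(\MN\) has the eigenvalue \(-1\notin\RR_{\ge0}\cup\{\infty\}\). This needs no extra blocks and works for all \(m,n\).
\end{enumerate}

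The main point to verify is that these two cases exhaust all pencils not of the displayed form. Given the description of \(\MM\) from Theorem \ref{th_EQ}, a pencil with all left singular blocks of size zero and all Jordan blocks sharing one eigenvalue is exactly of the displayed form, so the case analysis is complete.
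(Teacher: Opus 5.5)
Your proof is correct and is essentially the argument the paper intends: the paper only says the result follows analogously to Theorem \ref{th_nondegen}, with Theorem \ref{th_EQ} in place of Theorem \ref{th_2classes}, and you carry this out in full. In the necessity part you make exactly the needed changes. You use \(\LL_1 \oplus \LL_0^T\) in operation \ref{R6}, because left minimal indices are now unrestricted and so a right minimal index one is the obstruction. You use operation \ref{R4} with \(\JJ_1^{-1}\) to break a block \(\LL_s^T\), \(s \geq 1\), which is the analogue of the \ref{R3} step with \(\JJ_1^1\) in Theorem \ref{th_nondegen}.
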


\section{Properties of the pencil \(\lambda E - (J - R)Q\) with possibly singular \(\lambda E - Q\)} \label{sec_singular}

In this section we consider square matrix pencils \(\MM = \lambda E - (J - R)Q\) of the form \eqref{eq_form_general} without any additional assumptions about the regularity of \(\lambda E - Q\). We obtain that the KCF of such \(\MM\) may contain all types and sizes of the canonical blocks, although it cannot be arbitrary.

\begin{remark}
    Notice that the form \eqref{eq_form_general} is preserved under strict equivalence. Namely, consider a pencil \(\MM = \lambda E - (J - R)Q\) satisfying \eqref{eq_form_general}, and let \(\PP\) be a pencil strictly equivalent to \(\MM\). Then there exist nonsingular matrices \(V\) and \(W\) such that \(\PP = V \MM W\). Defining
    \[
        \hat{E} := VEW, \quad \hat{J} := VJV^*, \quad \hat{R} := VRV^*, \quad \text{and } \hat{Q} := (V^*)^{-1}QW,
    \]
    we obtain \(\PP = \lambda \hat{E} - (\hat{J} - \hat{R})\hat{Q}\), which also satisfies \eqref{eq_form_general}. Consequently, all pencils in class \(\dH\) are of the form \eqref{eq_form_general}. See e.g. \cite{blazhko,gillis} for related results.

\end{remark}

In Lemmas \ref{lem_right}, \ref{lem_jordan}, and \ref{lem_nilpotent}, we explicitly construct pencils of the form \eqref{eq_form_general} that have an arbitrary right minimal index, or contain in the KCF an arbitrary Jordan or nilpotent block, respectively. The construction of a pencil with an arbitrary left index is already available in the literature \cite[Ex. 4.8]{MMW_2018}. As a consequence, we obtain the following result.

\begin{theorem} \label{th_singular}
    Let \(\KK\) be one of the following:
    \begin{enumerate}[label=\rm(\alph*), noitemsep, topsep=3pt]
        \item \label{case_right} a right singular block \(\LL_k\) of size \(k \geq 0\);
        \item \label{case_left} a left singular block \(\LL_k^T\) of size \(k \geq 0\);
        \item \label{case_jordan} a Jordan block \(\JJ_k^\mu\) of size \(k \geq 1\) associated with a finite eigenvalue \(\mu \in \CC\);
        \item \label{case_nilpotent} a nilpotent block \(\JJ_k^\infty\) of size \(k \geq 1\) associated with the eigenvalue infinity.
    \end{enumerate}
    Then, for sufficiently large \(n\), there exists a pencil \(\MM \in \CC[\lambda]^{n \times n}\) satisfying \eqref{eq_form_general}, such that the KCF of \(\MM\) contains the block \(\KK\).
\end{theorem}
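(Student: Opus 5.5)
The plan is to exploit how much freedom the constraints in \eqref{eq_form_general} leave once \(Q\) is allowed to be singular. I will build a single construction that realises, up to extra zero rows, an \emph{arbitrary} rectangular pencil. Given any target block \(\KK\), the KCF of the constructed pencil will then contain \(\KK\). This handles all four cases \ref{case_right}--\ref{case_nilpotent} at once, so Lemmas \ref{lem_right}, \ref{lem_jordan}, \ref{lem_nilpotent} and \cite[Ex. 4.8]{MMW_2018} are not needed. The idea is to choose \(E\) and \(Q\) with disjoint row supports, so that \(Q^*E=0\) holds trivially.

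Fix \(m\geq 1\), set \(n=2m\), and split the rows as \(m+m\). Let \(F,H\in\CC^{m\times 2m}\) be arbitrary, and put
\[
E=\begin{bmatrix}0\\ F\end{bmatrix},\qquad Q=\begin{bmatrix}H\\ 0\end{bmatrix},\qquad J=\begin{bmatrix}0 & -I_m\\ I_m & 0\end{bmatrix},\qquad R=0 .
\]
\begin{itemize}
\item We have \(J^*=-J\) and \(R=0\geq 0\).
\item \(Q^*E=H^*\cdot 0+0\cdot F=0=E^*Q\), which is Hermitian and positive semidefinite.
\item Computing \((J-R)Q=JQ=\begin{bmatrix}0\\ H\end{bmatrix}\) gives
\[
\MM=\lambda E-(J-R)Q=\begin{bmatrix}0_{m\times 2m}\\ \lambda F-H\end{bmatrix}.
\]
\end{itemize}
Since the top rows are identically zero, the KCF of \(\MM\) is the KCF of the \(m\times 2m\) pencil \(\lambda F-H\) together with \(m\) blocks \(\LL_0^T\). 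Because \(F\) and \(H\) are arbitrary, \(\lambda F-H\) can be any pencil in \(\CC[\lambda]^{m\times 2m}\).

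Next, I realise \(\KK\) inside an \(m\times 2m\) pencil. Let \(\KK\) have size \(a\times b\). Take \(m=a\) and \(\lambda F-H=\KK\oplus (2a-b)\LL_0\), i.e. \(\KK\) followed by \(2a-b\) zero columns. This is legitimate whenever \(b\leq 2a\) and \(a\geq 1\):
\begin{itemize}
\item For \(\JJ_k^\mu\) (\(\mu\in\CC\)) and \(\JJ_k^\infty\), we have \(a=b=k\geq 1\).
\item For \(\LL_k^T\) with \(k\geq 0\), we have \(a=k+1\geq 1\) and \(b=k\leq 2a\).
\item For \(\LL_k\) with \(k\geq 1\), we have \(a=k\) and \(b=k+1\leq 2k\).
\end{itemize}
The one leftover case is \(\LL_0\), where \(a=0\). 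It is trivial: \(E=Q=0\) (with any admissible \(J,R\)) gives the zero pencil, whose KCF is \(n\LL_0\oplus n\LL_0^T\).

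Finally, for "sufficiently large \(n\)", I pad the construction by a direct sum with zero pencils. The form \eqref{eq_form_general} is preserved under block-diagonal direct sums, taking \(E,J,R,Q\) block diagonal, and each padding summand contributes only \(\LL_0\oplus\LL_0^T\). So any \(n\geq 2a\) (or \(n\geq 1\) for \(\LL_0\)) works.

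The main point to check carefully is the claim about the KCF of \(\begin{bmatrix}0\\ \PP\end{bmatrix}\). Since zero rows split off as \(\LL_0^T\) blocks, this is immediate from the definition of the KCF, and I expect no real obstacle beyond this bookkeeping.
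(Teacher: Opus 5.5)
Your proof is correct. It uses the same mechanism as the paper but packages it as a single general construction.

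The paper proves the theorem case by case. Lemmas \ref{lem_right}, \ref{lem_jordan} and \ref{lem_nilpotent} give explicit \(2k\times 2k\) constructions for \(\LL_k\) (\(k\geq 1\)), \(\JJ_k^\mu\) and \(\JJ_k^\infty\), and left singular blocks are taken from \cite[Ex. 4.8]{MMW_2018}.

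In those lemmas the trick is yours. \(E\) lives in the top \(k\) rows and \(Q\) in the bottom \(k\) rows, so \(E^*Q=Q^*E=0\). The anti-diagonal skew-symmetric \(J\) moves the nonzero rows of \(Q\) onto the rows of \(E\). The result is a \(k\times 2k\) pencil stacked on \(k\) zero rows. Your choices reproduce exactly the KCFs stated there: \(\LL_k\oplus(k-1)\LL_0\oplus k\LL_0^T\), \(\JJ_k^\mu\oplus k\LL_0\oplus k\LL_0^T\) and \(\JJ_k^\infty\oplus k\LL_0\oplus k\LL_0^T\).

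Because you observe that \(F\) and \(H\) are completely free, one construction covers all four cases. This includes left singular blocks without citing the literature and the trivial \(\LL_0\) case. You also make explicit the direct-sum padding needed for ``all sufficiently large \(n\)'', which the paper leaves implicit.

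The paper's route gives information the theorem does not need. It records the KCF of \(\lambda E-Q\) in each construction, which illustrates that a singular \(\lambda E-Q\) forces a right minimal index zero. Through \cite[Ex. 4.8]{MMW_2018} it also achieves the smaller size \(n=k+1\) for \(\LL_k^T\), whereas your construction needs \(n=2k+2\).
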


    Observe that the minimal \(n\) for which such a pencil \(\MM\) exists is equal to \(k\) when \(\KK\) is a square block appearing in the KCF of dH pencils, and equal to \(k + 1\) when \(\KK\) is a right singular block of size \(k \leq 1\) or an arbitrary left singular block; see Theorem \ref{th_2classes} and \cite[Ex. 4.8]{MMW_2018}. In the remaining cases, one has \(n \leq 2k\); see the following lemmas.

\begin{lemma} \label{lem_right}
    Let \(k \geq 1\) and consider the \(2k \times 2k\) matrices
    \begin{equation} \label{eq_JR}
    J = \left[
        \begin{NiceMatrix}
            & & & & & 1 \\
            & 0 & & & \iddots & \\
            & & & 1 & & \\ 
            & & -1 & & & \\
            & \raisebox{0.8ex}{\(\iddots\)} & & & 0 & \\
            -1 & & & & & \\
        \CodeAfter
        \tikz \draw[dash pattern=on 2pt off 2pt] (4-|1) -- (4-|last);
        \tikz \draw[dash pattern=on 2pt off 2pt] (1-|4) -- (last-|4);
        \tikz \draw[decorate, decoration={brace, amplitude=5pt}]
        ([yshift=-3pt]last-|3.9) -- node[below=4pt] {\(k\)} ([yshift=-3pt]last-|1);
        \tikz \draw[decorate, decoration={brace, amplitude=5pt}]
        ([yshift=-3pt]last-|last) -- node[below=4pt] {\(k\)} ([yshift=-3pt]last-|4.1);
        \tikz \draw[decorate, decoration={brace,amplitude=5pt}]
        ([xshift=7pt]1.1-|last) -- node[right=4pt] {\(k\)} ([xshift=7pt]3.9-|last);
        \tikz \draw[decorate, decoration={brace,amplitude=5pt}]
        ([xshift=7pt]4.1-|last) -- node[right=4pt] {\(k\)} ([xshift=7pt]7.9-|last);
        \end{NiceMatrix}
    \right],
    \vspace{20pt}
    \quad \quad
    R = 0,
    \vspace{5pt}        
    \end{equation}
    
    \[
    E = \left[
        \begin{NiceMatrix}
            1 & & & \ & & \ \ \\
            & \ddots & & & 0 & \\
            & & 1 & & & \\ 
            & & & & & \\
            & 0 & & & 0 & \\
            & & & & & 
        \CodeAfter
        \tikz \draw[dash pattern=on 2pt off 2pt]
        (4-|1) -- (4-|last);
        \tikz \draw[dash pattern=on 2pt off 2pt]
        (1-|4) -- (last-|4);
        \tikz \draw[decorate, decoration={brace, amplitude=5pt}]
        ([yshift=-3pt]last-|3.9) -- node[below=4pt] {\(k\)} ([yshift=-3pt]last-|1);
        \tikz \draw[decorate, decoration={brace, amplitude=5pt}]
        ([yshift=-3pt]last-|last) -- node[below=4pt] {\(k\)} ([yshift=-3pt]last-|4.1);
        \tikz \draw[decorate, decoration={brace,amplitude=5pt}]
        ([xshift=7pt]1.1-|last) -- node[right=4pt] {\(k\)} ([xshift=7pt]3.9-|last);
        \tikz \draw[decorate, decoration={brace,amplitude=5pt}]
        ([xshift=7pt]4.1-|last) -- node[right=4pt] {\(k\)} ([xshift=7pt]7.9-|last);
        \end{NiceMatrix}
        \right],
    \quad \text{ and } \
    Q = \left[
        \begin{NiceMatrix}
            & & & & \ & & \ \ \\
            & \ \ \ 0 & & & & 0 & \\
            & & & & & & \\ 
            & & 0 & 1 & & & \\
            & \iddots & \iddots & & & 0 & \\
            0 & 1 & & & & & 
        \CodeAfter
        \tikz \draw[dash pattern=on 2pt off 2pt] (4-|1) -- (4-|last);
        \tikz \draw[dash pattern=on 2pt off 2pt] (1-|5) -- (last-|5);
        \tikz \draw[decorate, decoration={brace, amplitude=5pt}]
        ([yshift=-3pt]last-|4.9) -- node[below=4pt] {\(k+1\)} ([yshift=-3pt]last-|1);
        \tikz \draw[decorate, decoration={brace, amplitude=5pt}]
        ([yshift=-3pt]last-|last) -- node[below=4pt] {\(k-1\)} ([yshift=-3pt]last-|5.1);
        \tikz \draw[decorate, decoration={brace,amplitude=5pt}]
        ([xshift=7pt]1.1-|last) -- node[right=4pt] {\(k\)} ([xshift=7pt]3.9-|last);
        \tikz \draw[decorate, decoration={brace,amplitude=5pt}]
        ([xshift=7pt]4.1-|last) -- node[right=4pt] {\(k\)} ([xshift=7pt]7.9-|last);
        \end{NiceMatrix}
    \right].
    \vspace{30pt}
    \]
    Then the pencil \(\lambda E - Q\) is strictly equivalent to \(\JJ_1^0 \oplus \JJ_1^\infty \oplus (k-1) \LL_0 \oplus (k-1) \LL_1^T\), with \(E^*Q = Q^*E \geq 0\). Moreover,
    \[
    \lambda E - (J-R)Q = \LL_k \oplus (k-1) \LL_0 \oplus k \LL_0^T.
    \]
\end{lemma}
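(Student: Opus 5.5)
The plan is to verify everything by explicit computation with the standard basis vectors \(e_1,\ldots,e_{2k}\) of \(\CC^{2k}\). First I read off the three matrices entrywise:
\begin{itemize}
\item \(E=\mathrm{diag}(I_k,0)\);
\item \(J\) is antidiagonal with \(J_{i,2k+1-i}=1\) for \(i\le k\) and \(J_{i,2k+1-i}=-1\) for \(i>k\);
\item \(Q\) has its first \(k\) rows zero, its first column and its last \(k-1\) columns zero, and \(Qe_j=e_{2k+2-j}\) for \(2\le j\le k+1\).
\end{itemize}
Since \(R=0\), the condition \(R^*=R\geq 0\) is trivial. The condition \(J^*=-J\) follows because the transposed antidiagonal positions \((i,2k+1-i)\) and \((2k+1-i,i)\) lie in opposite halves and carry opposite signs.

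For the identity \(E^*Q=Q^*E\geq0\), note that \(E^*=E\) projects onto the first \(k\) rows, and those rows of \(Q\) vanish. Hence \(E^*Q=0\), and therefore also \(Q^*E=(E^*Q)^*=0\), which is trivially positive semidefinite.

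Next I determine the KCF of \(\lambda E-Q\) column by column:
\begin{itemize}
\item column \(1\) is \(\lambda e_1\);
\item column \(j\), for \(2\le j\le k\), is \(\lambda e_j-e_{2k+2-j}\);
\item column \(k+1\) is \(-e_{k+1}\);
\item columns \(k+2,\ldots,2k\) are zero.
\end{itemize}
The row indices involved are \(\{1\}\), \(\{j,2k+2-j\}\) for \(2\le j\le k\), and \(\{k+1\}\). These sets are pairwise disjoint and together exhaust all \(2k\) rows.

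After a permutation of rows and columns, the pencil therefore splits into the following blocks:
\begin{itemize}
\item the \(1\times1\) block \(\lambda\), which is \(\JJ_1^0\);
\item the \(1\times1\) block \(-1\), which is \(\JJ_1^\infty\);
\item \(k-1\) blocks of size \(2\times1\) of the form \([\lambda;-1]\), each strictly equivalent to \(\LL_1^T\);
\item \(k-1\) zero columns, giving \((k-1)\LL_0\).
\end{itemize}
The sizes are consistent: \(1+1+2(k-1)=2k\) rows and \(1+1+(k-1)+(k-1)=2k\) columns.

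Finally I compute \((J-R)Q=JQ\). For \(2\le j\le k+1\) put \(m=2k+2-j\), so that \(Qe_j=e_m\). Since \(m\ge k+1\), the only nonzero entry of column \(m\) of \(J\) is the entry \(+1\) in row \(2k+1-m=j-1\). Thus \(JQe_j=e_{j-1}\), and \(JQ\) vanishes on all other columns.

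Consequently \(\lambda E-JQ\) has the following structure:
\begin{itemize}
\item its top-left \(k\times(k+1)\) block has \(\lambda\) on the diagonal and \(-1\) in positions \((j-1,j)\), which is exactly \(\LL_k\);
\item its last \(k-1\) columns are zero, giving \((k-1)\LL_0\);
\item its last \(k\) rows are zero, giving \(k\LL_0^T\).
\end{itemize}
This yields \(\lambda E-(J-R)Q=\LL_k\oplus(k-1)\LL_0\oplus k\LL_0^T\). For this part the equality holds literally, as a block-diagonal arrangement, without any permutation.

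The only real obstacle is reading the index conventions of the displayed matrices correctly. In particular I must confirm that the ones of \(Q\) occupy columns \(2,\ldots,k+1\), not \(1,\ldots,k\), and sit in rows \(2k,\ldots,k+1\). I must also confirm that the sign of \(J\) on the relevant columns is \(+1\), so that \(JQ\) is the shift \(e_j\mapsto e_{j-1}\) and not its negative. Should the sign turn out to be \(-1\), the resulting block would only differ from \(\LL_k\) by a diagonal scaling, which does not change the KCF. I would therefore double-check the case \(k=1\) and \(k=2\) by hand before writing the general argument.
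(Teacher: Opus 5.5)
Your proposal is correct and follows essentially the same route as the paper. You get $E^*Q=0$ from the vanishing top $k$ rows of $Q$, read off $JQ$ as the shift $e_j\mapsto e_{j-1}$ on columns $2,\ldots,k+1$ (yielding $\LL_k\oplus(k-1)\LL_0\oplus k\LL_0^T$ literally), and split $\lambda E-Q$ via its disjoint row supports into $\JJ_1^0\oplus\JJ_1^\infty\oplus(k-1)\LL_1^T\oplus(k-1)\LL_0$; the paper uses a row permutation alone here, since the columns are already in block order, and your index and sign readings of $Q$ and $J$ are right, so your closing hedges are unnecessary.
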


\begin{proof}
    Observe that \(E^*Q = Q^*E = 0\). Moreover, one can easily verify that the pencil \(\lambda E - (J -R)Q\) satisfies the described form. We will show that the pencil \(\lambda E - Q\) is of the indicated KCF. We obtain
        \[
        \lambda E - Q = \left[
        \begin{NiceMatrix}
            \lambda & & & & & \ & & \ \ \\
            & \lambda  & & & 0 & & 0 & \\
            & & \ddots  & & & & & \\
            & & & \lambda & & & & \\ 
            & & & 0 & -1 & & & \\
            & & 0 & -1 & & & & \\
            & \iddots & \iddots & & & & 0 & \\
            0 & -1 & & & & & &
        \CodeAfter
        \tikz \draw[dash pattern=on 2pt off 2pt]
        (5-|1) -- (5-|last);
        \tikz \draw[dash pattern=on 2pt off 2pt]
        (1-|5) -- (last-|5);
        \tikz \draw[dash pattern=on 2pt off 2pt]
        (1-|6) -- (last-|6);
        \tikz \draw[decorate, decoration={brace, amplitude=5pt}]
        ([yshift=-3pt]last-|4.9) -- node[below=4pt] {\(k\)} ([yshift=-3pt]last-|1);
        \tikz \draw[decorate, decoration={brace, amplitude=5pt}]
        ([yshift=-3pt]last-|6) -- node[below=4pt] {\(1\)} ([yshift=-3pt]last-|5);
        \tikz \draw[decorate, decoration={brace, amplitude=5pt}]
        ([yshift=-3pt]last-|last) -- node[below=4pt] {\(k-1\)} ([yshift=-3pt]last-|6.2);
        \tikz \draw[decorate, decoration={brace,amplitude=5pt}]
        ([xshift=7pt]1.1-|last) -- node[right=4pt] {\(k\)} ([xshift=7pt]4.9-|last);
        \tikz \draw[decorate, decoration={brace,amplitude=5pt}]
        ([xshift=7pt]5.1-|last) -- node[right=4pt] {\(k\)} ([xshift=7pt]8.9-|last);
        \end{NiceMatrix}
        \right],
        \vspace{20pt}
        \]
        which, by a permutation of the rows, is strictly equivalent to the pencil
        \[
        \left[
        \begin{NiceMatrix}
            \rule{0pt}{3ex} \ \lambda \ & & 0 & & 0 & \ & 0 & \ \ \\
            & \lambda & & & & & & \\
            & -1 & & & & & & \\ 
            0 & & \rule[-3ex]{0pt}{8ex} \ddots & & 0 & & 0 & \\
            & & & \lambda & & & & \\
            & & & -1 & & & & \\
            \rule{0pt}{3ex} 0 & & 0 & & -1 & & 0 & \\
        \CodeAfter
        \tikz \draw[dash pattern=on 2pt off 2pt]
        (2-|1) -- (2-|last);
        \tikz \draw[dash pattern=on 2pt off 2pt]
        (7-|1) -- (7-|last);
        \tikz \draw[dash pattern=on 2pt off 2pt]
        (1-|2) -- (last-|2);
        \tikz \draw[dash pattern=on 2pt off 2pt]
        (1-|5) -- (last-|5);
        \tikz \draw[dash pattern=on 2pt off 2pt]
        (1-|6) -- (last-|6);
        \tikz \draw[decorate, decoration={brace, amplitude=5pt}]
        ([yshift=-3pt]last-|2) -- node[below=4pt] {\(1\)} ([yshift=-3pt]last-|1);
        \tikz \draw[decorate, decoration={brace, amplitude=5pt}]
        ([yshift=-3pt]last-|4.9) -- node[below=4pt] {\(k-1\)} ([yshift=-3pt]last-|2.1);
        \tikz \draw[decorate, decoration={brace, amplitude=5pt}]
        ([yshift=-3pt]last-|6) -- node[below=4pt] {\(1\)} ([yshift=-3pt]last-|5);
        \tikz \draw[decorate, decoration={brace, amplitude=5pt}]
        ([yshift=-3pt]last-|last) -- node[below=4pt] {\(k-1\)} ([yshift=-3pt]last-|6.2);
        \tikz \draw[decorate, decoration={brace,amplitude=5pt}]
        ([xshift=-7pt]2-|1) -- node[left=4pt] {\(1\)} ([xshift=-7pt]1.1-|1);
        \tikz \draw[decorate, decoration={brace,amplitude=5pt}]
        ([xshift=-7pt]6.8-|1) -- node[left=4pt] {\(2k - 2\)} ([xshift=-7pt]2.2-|1);
        \tikz \draw[decorate, decoration={brace,amplitude=5pt}]
        ([xshift=-7pt]7.9-|1) -- node[left=4pt] {\(1\)} ([xshift=-7pt]7-|1);
        \end{NiceMatrix}
        \right]
        = \JJ_1^0 \oplus (k-1) \LL_1^T \oplus \JJ_1^\infty \oplus (k-1) \LL_0.
        \vspace{20pt}
        \]
\end{proof}

\begin{lemma} \label{lem_jordan}
    Let \(k \geq 1\), \(\mu \in \CC\), and consider the \(2k \times 2k\) matrices \(J, R\) as in \eqref{eq_JR},
    \[
    E = \left[
        \begin{NiceMatrix}
            1 & & & \ & & \ \ \\
            & \ddots & & & 0 & \\
            & & 1 & & & \\ 
            & & & & & \\
            & 0 & & & 0 & \\
            & & & & & 
        \CodeAfter
        \tikz \draw[dash pattern=on 2pt off 2pt]
        (4-|1) -- (4-|last);
        \tikz \draw[dash pattern=on 2pt off 2pt]
        (1-|4) -- (last-|4);
        \tikz \draw[decorate, decoration={brace, amplitude=5pt}]
        ([yshift=-3pt]last-|3.9) -- node[below=4pt] {\(k\)} ([yshift=-3pt]last-|1);
        \tikz \draw[decorate, decoration={brace, amplitude=5pt}]
        ([yshift=-3pt]last-|last) -- node[below=4pt] {\(k\)} ([yshift=-3pt]last-|4.1);
        \tikz \draw[decorate, decoration={brace,amplitude=5pt}]
        ([xshift=7pt]1.1-|last) -- node[right=4pt] {\(k\)} ([xshift=7pt]3.9-|last);
        \tikz \draw[decorate, decoration={brace,amplitude=5pt}]
        ([xshift=7pt]4.1-|last) -- node[right=4pt] {\(k\)} ([xshift=7pt]7.9-|last);
        \end{NiceMatrix}
        \right],
    \quad \text{ and } \
    Q = \left[
        \begin{NiceMatrix}
            & & & & \ & & \ \ \\
            & \ \ \ 0 & & & & 0 & \\
            & & & & & & \\ 
            & & & \mu & & & \\
            & & \mu & 1 & & & \\
            & \iddots & \iddots & & & 0 & \\
            \mu & 1 & & & & & 
        \CodeAfter
        \tikz \draw[dash pattern=on 2pt off 2pt] (4-|1) -- (4-|last);
        \tikz \draw[dash pattern=on 2pt off 2pt] (1-|5) -- (last-|5);
        \tikz \draw[decorate, decoration={brace, amplitude=5pt}]
        ([yshift=-3pt]last-|4.9) -- node[below=4pt] {\(k\)} ([yshift=-3pt]last-|1);
        \tikz \draw[decorate, decoration={brace, amplitude=5pt}]
        ([yshift=-3pt]last-|last) -- node[below=4pt] {\(k\)} ([yshift=-3pt]last-|5.1);
        \tikz \draw[decorate, decoration={brace,amplitude=5pt}]
        ([xshift=7pt]1.1-|last) -- node[right=4pt] {\(k\)} ([xshift=7pt]3.9-|last);
        \tikz \draw[decorate, decoration={brace,amplitude=5pt}]
        ([xshift=7pt]4.1-|last) -- node[right=4pt] {\(k\)} ([xshift=7pt]7.9-|last);
        \end{NiceMatrix}
    \right].
    \vspace{20pt}
    \]
    Then the pencil \(\lambda E - Q\) is strictly equivalent to \(\JJ_1^0 \oplus k \LL_0 \oplus \LL_0^T \oplus (k-1) \LL_1^T\) if \(\mu = 0\), and to \(k \LL_0 \oplus k \LL_1^T\) if \(\mu \neq 0\). Moreover, \(E^*Q = Q^*E \geq 0\), and
    \[
    \lambda E - (J-R)Q = \JJ_k^\mu \oplus k \LL_0 \oplus k \LL_0^T.
    \]
\end{lemma}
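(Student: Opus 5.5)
The plan is to compute the block structure of all the products explicitly, exploiting the fact that every matrix involved is built from \(k \times k\) blocks. Write \(F\) for the \(k \times k\) flip matrix (ones on the antidiagonal), \(N\) for the \(k \times k\) upper shift (ones on the superdiagonal), and \(B\) for the lower-left \(k \times k\) block of \(Q\). Then
\[
J = \begin{bmatrix} 0 & F \\ -F & 0 \end{bmatrix}, \quad E = \begin{bmatrix} I_k & 0 \\ 0 & 0 \end{bmatrix}, \quad Q = \begin{bmatrix} 0 & 0 \\ B & 0 \end{bmatrix}.
\]
Reading off the entries of \(Q\), row \(i\) of \(FB\) (which is row \(k+1-i\) of \(B\)) has \(\mu\) in column \(i\) and \(1\) in column \(i+1\). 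Hence \(FB = \mu I_k + N\), and in particular \(B = F(\mu I_k + N)\).

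The conditions on \(E\) and \(Q\), and the form of the full pencil, follow by direct block multiplication. First, \(E^*Q = 0\) and \(Q^*E = 0\), because the nonzero block of \(Q\) sits in rows that \(E\) annihilates. So \(E^*Q = Q^*E = 0 \geq 0\). Second, since \(R = 0\), we get
\[
JQ = \begin{bmatrix} FB & 0 \\ 0 & 0 \end{bmatrix}.
\]
Therefore \(\lambda E - (J-R)Q = (\lambda I_k - \mu I_k - N) \oplus 0_{k \times k}\). The first summand is exactly \(\JJ_k^\mu\). The \(k \times k\) zero block consists of \(k\) blocks \(\LL_0\) and \(k\) blocks \(\LL_0^T\). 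This gives the claimed KCF \(\JJ_k^\mu \oplus k\LL_0 \oplus k\LL_0^T\).

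The main step is the KCF of \(\lambda E - Q\). We have
\[
\lambda E - Q = \begin{bmatrix} \lambda I_k & 0 \\ -B & 0 \end{bmatrix}.
\]
Its last \(k\) columns vanish, which contributes \(k\LL_0\). Multiplying the lower block row by \(-F\) reduces what remains to the \(2k \times k\) pencil
\[
\PP = \begin{bmatrix} \lambda I_k \\ C \end{bmatrix}, \qquad C := \mu I_k + N.
\]
The argument then counts invariants of \(\PP\).

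\(\PP\) has full column rank \(k\) for every \(\lambda \in \overline{\CC}\) except possibly \(\lambda = 0\), where it reduces to \(C\). It therefore has no right minimal indices and no eigenvalues other than possibly \(0\). Since \(\PP\) is \(2k \times k\) of normal rank \(k\), it has exactly \(k\) left singular blocks. The number of \(\LL_0^T\) blocks among them equals the number of independent constant left null vectors \(y = (y_1, y_2)\). The condition \(y^T \PP = 0\) for all \(\lambda\) forces \(y_1 = 0\) and \(y_2^T C = 0\), so this number is \(\dim \ker C^T\).

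If \(\mu \neq 0\), then \(C\) is invertible. Hence there are no \(\LL_0^T\) blocks and no eigenvalues. The \(k\) left indices are then all at least \(1\), while the column count forces their sum to be \(k\). So all of them equal \(1\), and \(\PP \sim k\LL_1^T\).

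If \(\mu = 0\), then \(C = N\) has a one-dimensional left kernel, giving exactly one \(\LL_0^T\). The rank of \(\PP\) drops by one at \(\lambda = 0\), so \(0\) is an eigenvalue and the regular part is a single block \(\JJ_m^0\) with \(m \geq 1\). The remaining \(k-1\) left blocks have indices at least \(1\). The column count \(m + \sum j = k\) then forces \(m = 1\) and every remaining index to equal \(1\). This yields \(\JJ_1^0 \oplus \LL_0^T \oplus (k-1)\LL_1^T\), which together with the \(k\LL_0\) from the zero columns is the claimed KCF.

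I expect the only delicate point to be this dimension and rank bookkeeping in the case \(\mu = 0\). In particular, one must justify that the rank drop at \(0\) produces exactly one Jordan chain. Instead of arguing through the counting above, this can also be checked directly by exhibiting the block decomposition, as was done in the proof of Lemma \ref{lem_right}.
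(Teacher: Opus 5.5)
Your proof is correct, and for the Kronecker form of $\lambda E - Q$ it takes a genuinely different route from the paper.

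The paper argues constructively. For $\mu = 0$ it permutes the rows of $\lambda E - Q$ so that each $\lambda$ in the upper half sits next to the $-1$ below it, which exhibits $\JJ_1^0 \oplus (k-1)\LL_1^T \oplus \LL_0^T \oplus k\LL_0$ directly. For $\mu \neq 0$ it first eliminates the $-1$'s in the lower half by row operations, scales those rows by $1/\mu$, and permutes to reach $k\LL_1^T \oplus k\LL_0$.

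You instead reduce to $\begin{bmatrix} \lambda I_k \\ C \end{bmatrix}$ with $C = \mu I_k + N$, and you read off the form from invariants:
\begin{itemize}
\item full column rank on $\overline{\CC}\setminus\{0\}$, so there are no right indices and no eigenvalues other than $0$;
\item $2k - \nrank = k$ left blocks, of which $\dim\ker C^T$ are $\LL_0^T$;
\item the rank drop at $0$;
\item a column count that forces all block sizes.
\end{itemize}

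The paper's route yields explicit transformation matrices and uses nothing beyond the definition, in line with Lemmas \ref{lem_right} and \ref{lem_nilpotent}. However, it conveys the row pairing only through displayed matrices. Your route sidesteps that bookkeeping and is a reusable template. The price is relying on standard facts about KCF invariants, in particular that $\nrank \PP - \operatorname{rank} \PP(\lambda_0)$ equals the number of Jordan blocks at $\lambda_0$. That fact is exactly what settles the point you flagged as delicate.

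From your reduced form the explicit route is also one line:
\begin{itemize}
\item For $\mu \neq 0$, multiply the lower block by $-C^{-1}$ and interleave the rows.
\item For $\mu = 0$, negate the lower block and pair row $i$ of $\lambda I_k$ with row $i-1$ of $-N$ for $2 \le i \le k$. The unpaired first row then gives $\JJ_1^0$, and the zero last row gives $\LL_0^T$.
\end{itemize}

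Finally, your block description $J = \begin{bmatrix} 0 & F \\ -F & 0 \end{bmatrix}$ with $FB = \mu I_k + N$ makes the identity $\lambda E - (J-R)Q = \JJ_k^\mu \oplus k\LL_0 \oplus k\LL_0^T$ fully transparent; the paper merely calls it straightforward.
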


\begin{proof}
    We have \(E^*Q = Q^*E = 0\) and it is straightforward that the pencil \(\lambda E - (J -R)Q\) is of the described form. To verify that the pencil \(\lambda E - Q\) is strictly equivalent to the indicated KCF, first observe that
        \[
        \lambda E - Q = \left[
        \begin{NiceMatrix}
            \lambda & & & & \ & & \ \ \\
            & \lambda & & & & 0 & \\
            & & \ddots & & & & \\
            & & & \lambda & & & \\ 
            & & & -\mu & & & \\
            & & -\mu & -1 & & & \\
            & \iddots & \iddots & & & 0 & \\
            -\mu & -1 & & & & &
        \CodeAfter
        \tikz \draw[dash pattern=on 2pt off 2pt]
        (5-|1) -- (5-|last);
        \tikz \draw[dash pattern=on 2pt off 2pt]
        (1-|5) -- (last-|5);
        \tikz \draw[decorate, decoration={brace, amplitude=5pt}]
        ([yshift=-3pt]last-|4.9) -- node[below=4pt] {\(k\)} ([yshift=-3pt]last-|1);
        \tikz \draw[decorate, decoration={brace, amplitude=5pt}]
        ([yshift=-3pt]last-|last) -- node[below=4pt] {\(k\)} ([yshift=-3pt]last-|5);
        \tikz \draw[decorate, decoration={brace,amplitude=5pt}]
        ([xshift=7pt]1.1-|last) -- node[right=4pt] {\(k\)} ([xshift=7pt]4.9-|last);
        \tikz \draw[decorate, decoration={brace,amplitude=5pt}]
        ([xshift=7pt]5.1-|last) -- node[right=4pt] {\(k\)} ([xshift=7pt]8.9-|last);
        \end{NiceMatrix}
        \right].
        \vspace{20pt}
        \]
        When \(\mu = 0\), by a permutation of the rows, the pencil \(\lambda E - Q\) is strictly equivalent to
        \[
        \left[
        \begin{NiceMatrix}
            \rule{0pt}{3ex} \ \lambda \ & & 0 & & \ & 0 & \ \ \\
            & \lambda & & & & & \\
            & -1 & & & & & \\ 
            0 & & \rule[-3ex]{0pt}{8ex} \ddots & & & 0 & \\
            & & & \lambda & & & \\
            & & & -1 & & & \\
            \rule{0pt}{3ex} 0 & & 0 & & & 0 & \\
        \CodeAfter
        \tikz \draw[dash pattern=on 2pt off 2pt]
        (2-|1) -- (2-|last);
        \tikz \draw[dash pattern=on 2pt off 2pt]
        (7-|1) -- (7-|last);
        \tikz \draw[dash pattern=on 2pt off 2pt]
        (1-|2) -- (last-|2);
        \tikz \draw[dash pattern=on 2pt off 2pt]
        (1-|5) -- (last-|5);
        \tikz \draw[decorate, decoration={brace, amplitude=5pt}]
        ([yshift=-3pt]last-|2) -- node[below=4pt] {\(1\)} ([yshift=-3pt]last-|1);
        \tikz \draw[decorate, decoration={brace, amplitude=5pt}]
        ([yshift=-3pt]last-|4.9) -- node[below=4pt] {\(k-1\)} ([yshift=-3pt]last-|2.1);
        \tikz \draw[decorate, decoration={brace, amplitude=5pt}]
        ([yshift=-3pt]last-|last) -- node[below=4pt] {\(k\)} ([yshift=-3pt]last-|5.1);
        \tikz \draw[decorate, decoration={brace,amplitude=5pt}]
        ([xshift=-7pt]2-|1) -- node[left=4pt] {\(1\)} ([xshift=-7pt]1.1-|1);
        \tikz \draw[decorate, decoration={brace,amplitude=5pt}]
        ([xshift=-7pt]6.8-|1) -- node[left=4pt] {\(2(k - 1)\)} ([xshift=-7pt]2.2-|1);
        \tikz \draw[decorate, decoration={brace,amplitude=5pt}]
        ([xshift=-7pt]7.9-|1) -- node[left=4pt] {\(1\)} ([xshift=-7pt]7-|1);
        \end{NiceMatrix}
        \right]
        = \JJ_1^0 \oplus (k-1) \LL_1^T \oplus \LL_0^T \oplus k \LL_0.
        \vspace{20pt}
        \]
        If \(\mu \neq 0\), we reduce the minus ones in the lower half of the pencil \(\lambda E - Q\) and then divide the rows \(k+1, \ldots, 2k\) by \(\mu\). Now, again by permuting the rows, we obtain that the pencil \(\lambda E - Q\) is strictly equivalent to
        \[
        \left[
        \begin{NiceMatrix}
            \lambda & & & \ & & \ \ \\
            -1 & & & & & \\ 
            & \rule[-3ex]{0pt}{8ex} \ddots & & & 0 & \\
            & & \lambda & & & \\
            & & -1 & & & \\
        \CodeAfter
        \tikz \draw[dash pattern=on 2pt off 2pt]
        (1-|4) -- (last-|4);
        \tikz \draw[decorate, decoration={brace, amplitude=5pt}]
        ([yshift=-3pt]last-|3.9) -- node[below=4pt] {\(k\)} ([yshift=-3pt]last-|1);
        \tikz \draw[decorate, decoration={brace, amplitude=5pt}]
        ([yshift=-3pt]last-|last) -- node[below=4pt] {\(k\)} ([yshift=-3pt]last-|4.1);
        \tikz \draw[decorate, decoration={brace,amplitude=5pt}]
        ([xshift=-7pt]5.9-|1) -- node[left=4pt] {\(2k\)} ([xshift=-7pt]1.1-|1);
        \end{NiceMatrix}
        \right]
        = k \LL_1^T \oplus k \LL_0.
        \vspace{10pt}
        \]
\end{proof}

\begin{lemma} \label{lem_nilpotent}
    Let \(k \geq 1\) and consider the \(2k \times 2k\) matrices \(J, R\) as in \eqref{eq_JR},
    \[
    E = \left[
        \begin{NiceMatrix}
            0 & 1 & & & \ & & \ \ \\
            & 0 & \raisebox{-0.8ex}{\(\ddots\)} & & & 0 & \\
            & & \ddots & 1 & & & \\ 
            & & & 0 & & & \\
            & & & & & & \\
            & \ \ 0 & & & & 0 & \\
            & & & & & & 
        \CodeAfter
        \tikz \draw[dash pattern=on 2pt off 2pt]
        (5-|1) -- (5-|last);
        \tikz \draw[dash pattern=on 2pt off 2pt]
        (1-|5) -- (last-|5);
        \tikz \draw[decorate, decoration={brace, amplitude=5pt}]
        ([yshift=-3pt]last-|4.9) -- node[below=4pt] {\(k\)} ([yshift=-3pt]last-|1);
        \tikz \draw[decorate, decoration={brace, amplitude=5pt}]
        ([yshift=-3pt]last-|last) -- node[below=4pt] {\(k\)} ([yshift=-3pt]last-|5.1);
        \tikz \draw[decorate, decoration={brace,amplitude=5pt}]
        ([xshift=7pt]1.1-|last) -- node[right=4pt] {\(k\)} ([xshift=7pt]4.9-|last);
        \tikz \draw[decorate, decoration={brace,amplitude=5pt}]
        ([xshift=7pt]5.1-|last) -- node[right=4pt] {\(k\)} ([xshift=7pt]7.9-|last);
        \end{NiceMatrix}
        \right],
        \vspace{20pt}
    \quad \text{ and } \
    Q = \left[
        \begin{NiceMatrix}
            & & & \ & & \ \ \\
            & 0 & & & 0 & \\ 
            & & & & & \\
            & & 1 & & & \\
            & \iddots & & & 0 & \\
            1 & & & & &
        \CodeAfter
        \tikz \draw[dash pattern=on 2pt off 2pt] (4-|1) -- (4-|last);
        \tikz \draw[dash pattern=on 2pt off 2pt] (1-|4) -- (last-|4);
        \tikz \draw[decorate, decoration={brace, amplitude=5pt}]
        ([yshift=-3pt]last-|3.9) -- node[below=4pt] {\(k\)} ([yshift=-3pt]last-|1);
        \tikz \draw[decorate, decoration={brace, amplitude=5pt}]
        ([yshift=-3pt]last-|last) -- node[below=4pt] {\(k\)} ([yshift=-3pt]last-|4.1);
        \tikz \draw[decorate, decoration={brace,amplitude=5pt}]
        ([xshift=7pt]1.1-|last) -- node[right=4pt] {\(k\)} ([xshift=7pt]3.9-|last);
        \tikz \draw[decorate, decoration={brace,amplitude=5pt}]
        ([xshift=7pt]4.1-|last) -- node[right=4pt] {\(k\)} ([xshift=7pt]7.9-|last);
        \end{NiceMatrix}
    \right],
    \]
    Then the pencil \(\lambda E - Q\) is strictly equivalent to \(\JJ_1^\infty \oplus k \LL_0 \oplus \LL_0^T \oplus (k-1) \LL_1^T\), with \(E^*Q = Q^*E \geq 0\). Moreover,
    \[
    \lambda E - (J-R)Q = \JJ_k^\infty \oplus k \LL_0 \oplus k \LL_0^T.
    \]
\end{lemma}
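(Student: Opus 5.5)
The plan is a direct block computation. I will write the $2k\times 2k$ matrices in $k\times k$ blocks. Let $F$ be the $k\times k$ anti-identity, so $F^* = F$ and $F^2 = I_k$, and let $N$ be the $k\times k$ nilpotent shift with ones on the superdiagonal. Reading off the displays gives
\[
J=\begin{bmatrix}0&F\\-F&0\end{bmatrix},\quad R=0,\quad E=\begin{bmatrix}N&0\\0&0\end{bmatrix},\quad Q=\begin{bmatrix}0&0\\F&0\end{bmatrix}.
\]
Everything then reduces to multiplying these block matrices and reading off Kronecker blocks.

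\textbf{Condition on $E$ and $Q$.} The first step is to compute $E^*Q$ and $Q^*E$. We have
\[
E^*Q=\begin{bmatrix}N^T&0\\0&0\end{bmatrix}\begin{bmatrix}0&0\\F&0\end{bmatrix}=0,
\qquad
Q^*E=\begin{bmatrix}0&F\\0&0\end{bmatrix}\begin{bmatrix}N&0\\0&0\end{bmatrix}=0.
\]
Hence $E^*Q=Q^*E=0\geq 0$, as in Lemmas \ref{lem_right} and \ref{lem_jordan}. Trivially $J^*=-J$ and $R=0$, so \eqref{eq_form_general} holds.

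\textbf{KCF of $\lambda E-(J-R)Q$.} Next I compute
\[
JQ=\begin{bmatrix}F^2&0\\0&0\end{bmatrix}=\begin{bmatrix}I_k&0\\0&0\end{bmatrix},
\qquad\text{so}\qquad
\lambda E-(J-R)Q=\begin{bmatrix}\lambda N-I_k&0\\0&0\end{bmatrix}.
\]
The block $\lambda N-I_k$ has $-1$ on the diagonal and $\lambda$ on the superdiagonal, so it is literally $\JJ_k^\infty$. The zero $k\times k$ block contributes $k$ zero columns and $k$ zero rows, that is $k\LL_0\oplus k\LL_0^T$. This gives the claimed form.

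\textbf{KCF of $\lambda E-Q$.} This step needs the most bookkeeping, though it is elementary. We have
\[
\lambda E-Q=\begin{bmatrix}\lambda N&0\\-F&0\end{bmatrix}.
\]
The last $k$ columns vanish, giving $k\LL_0$. On the remaining $2k\times k$ part, I multiply the lower block rows on the left by $-F$, which turns $-F$ into $I_k$; this is a strict equivalence.

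Now I read off the columns.
\begin{itemize}
\item Column $1$ has only the constant entry $1$ in lower row $1$. Together with that row it forms a $1\times1$ block strictly equivalent to $\JJ_1^\infty$.
\item For $2\le j\le k$, column $j$ has $\lambda$ in upper row $j-1$ and $1$ in lower row $j$. After permuting rows it forms a $2\times1$ block $[\lambda;\,1]$. Scaling that row by $-1$ turns it into $[\lambda;\,-1]=\LL_1^T$, giving $(k-1)\LL_1^T$.
\item Upper row $k$ is identically zero, contributing one $\LL_0^T$.
\end{itemize}

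After a row permutation and these sign scalings, $\lambda E-Q$ is strictly equivalent to $\JJ_1^\infty\oplus k\LL_0\oplus\LL_0^T\oplus(k-1)\LL_1^T$. The only care needed is checking that the row pairings are disjoint and exhaust all rows. They do: the lower rows $1,\dots,k$ and upper rows $1,\dots,k-1$ are used exactly once each, and upper row $k$ is the leftover zero row.
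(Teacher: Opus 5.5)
Your proof is correct and follows the paper's argument. Both compute directly that $E^*Q=Q^*E=0$ and $JQ=I_k\oplus 0$, and then read off the Kronecker blocks of $\lambda E-Q$ after elementary row operations. The only cosmetic difference is that the paper uses a pure row permutation, pairing upper row $j-1$ with lower row $k+1-j$; the $-1$ entries of $-F$ already match those in $\JJ_1^\infty$ and $\LL_1^T$, so your left multiplication by $-F$ and the later sign rescalings are unnecessary but harmless.
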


\begin{proof}
     We obtain that \(E^*Q = Q^*E = 0\). Similarly to previous lemmas, it is easy to verify that the pencil \(\lambda E - (J -R)Q\) satisfies the described form. Hence, it remains to show that the pencil \(\lambda E - Q\) is of the indicated KCF.
     We have
        \[
        \hspace*{-3cm}
        \lambda E - Q = \left[
        \begin{NiceMatrix}
            0 & \lambda & & & \ & & \ \ \\
            & 0 & \raisebox{-0.8ex}{\(\ddots\)} & & & 0 & \\
            & & \ddots & \lambda & & & \\ 
            & & & 0 & & & \\
            & & & -1 & & & \\
            & & \iddots & & & & \\
            & -1 & & & & 0 & \\
            -1 & & & & & & 
        \CodeAfter
        \tikz \draw[dash pattern=on 2pt off 2pt]
        (5-|1) -- (5-|last);
        \tikz \draw[dash pattern=on 2pt off 2pt]
        (1-|5) -- (last-|5);
        \tikz \draw[decorate, decoration={brace, amplitude=5pt}]
        ([yshift=-3pt]last-|4.9) -- node[below=4pt] {\(k\)} ([yshift=-3pt]last-|1);
        \tikz \draw[decorate, decoration={brace, amplitude=5pt}]
        ([yshift=-3pt]last-|last) -- node[below=4pt] {\(k\)} ([yshift=-3pt]last-|5);
        \tikz \draw[decorate, decoration={brace,amplitude=5pt}]
        ([xshift=7pt]1.2-|last) -- node[right=4pt] {\(k\)} ([xshift=7pt]4.9-|last);
        \tikz \draw[decorate, decoration={brace,amplitude=5pt}]
        ([xshift=7pt]5.1-|last) -- node[right=4pt] {\(k\)} ([xshift=7pt]8.8-|last);
        \end{NiceMatrix}
        \right].
        \vspace{25pt}
        \]
        By permuting the rows, \(\lambda E - Q\) is strictly equivalent to the pencil
        \[
        \left[
        \begin{NiceMatrix}
            \rule{0pt}{3ex} -1 & & 0 & & \ & 0 & \ \ \\
            & \lambda & & & & & \\
            & -1 & & & & & \\ 
            0 & & \rule[-3ex]{0pt}{8ex} \ddots & & & 0 & \\
            & & & \lambda & & & \\
            & & & -1 & & & \\
            \rule{0pt}{3ex} 0 & & 0 & & &  0 & \\
        \CodeAfter
        \tikz \draw[dash pattern=on 2pt off 2pt]
        (2-|1) -- (2-|last);
        \tikz \draw[dash pattern=on 2pt off 2pt]
        (7-|1) -- (7-|last);
        \tikz \draw[dash pattern=on 2pt off 2pt]
        (1-|2) -- (last-|2);
        \tikz \draw[dash pattern=on 2pt off 2pt]
        (1-|5) -- (last-|5);
        \tikz \draw[decorate, decoration={brace, amplitude=5pt}]
        ([yshift=-3pt]last-|2) -- node[below=4pt] {\(1\)} ([yshift=-3pt]last-|1);
        \tikz \draw[decorate, decoration={brace, amplitude=5pt}]
        ([yshift=-3pt]last-|4.9) -- node[below=4pt] {\(k-1\)} ([yshift=-3pt]last-|2.1);
        \tikz \draw[decorate, decoration={brace, amplitude=5pt}]
        ([yshift=-3pt]last-|last) -- node[below=4pt] {\(k\)} ([yshift=-3pt]last-|5.1);
        \tikz \draw[decorate, decoration={brace,amplitude=5pt}]
        ([xshift=-7pt]2-|1) -- node[left=4pt] {\(1\)} ([xshift=-7pt]1.1-|1);
        \tikz \draw[decorate, decoration={brace,amplitude=5pt}]
        ([xshift=-7pt]6.8-|1) -- node[left=4pt] {\(2(k - 1)\)} ([xshift=-7pt]2.2-|1);
        \tikz \draw[decorate, decoration={brace,amplitude=5pt}]
        ([xshift=-7pt]7.9-|1) -- node[left=4pt] {\(1\)} ([xshift=-7pt]7-|1);
        \end{NiceMatrix}
        \right]
        = \JJ_1^{\infty} \oplus (k-1) \LL_1^T \oplus \LL_0^T \oplus k \LL_0.
        \vspace{10pt}
        \]
\end{proof}

As already mentioned, even if \(\lambda E - Q\) is allowed to be singular, pencil \(\MM = \lambda E - (J - R) Q\) satisfying \eqref{eq_form_general} cannot have an arbitrary KCF. First of all, by \cite[Prop. 4.1]{MMW_2018}, we obtain that if \(\lambda E - Q\) is singular, then also pencil \(\MM\) is singular. This might suggest that pencil \(\MM\) could have an arbitrary singular KCF, but even that restriction is not sufficient, as the next example shows.

\begin{example}
    Let \(\MN =  \LL_2 \oplus \LL_0^T\). Assume \(\MN\) is the KCF of a pencil \(\MM = \lambda E - (J - R)Q\) satisfying \eqref{eq_form_general}. By Theorem \ref{th_2classes}, pencil \(\lambda E - Q\) cannot be regular, as \(\MM\) has a right minimal index larger than one. If \(\lambda E - Q\) is singular, then as a square pencil it must have both right and left minimal indices. In that case, by Theorem \ref{th_EQ}, \(\lambda E - Q\) must have a right minimal index zero, which corresponds to a common right nullspace of \(E\) and \(Q\). Hence, also \(E\) and \((J-R)Q\) have a common right nullspace. Therefore, pencil \(\MM\) has a right minimal index zero, so it is not strictly equivalent to \(\MN\), and we obtain a contradiction.
\end{example}

Consequently, we obtain the following observation.

\begin{observation}
    Let the pencil \(\MM = \lambda E - (J - R)Q\) be of the form \eqref{eq_form_general} with singular \(\lambda E - Q\). Then \(\MM\) has at least one right minimal index zero.
\end{observation}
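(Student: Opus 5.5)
The plan is to generalise the argument of the preceding example. Let \(\MM = \lambda E - (J-R)Q\) satisfy \eqref{eq_form_general}, with \(E, Q \in \CC^{n \times n}\) and \(\lambda E - Q\) singular. I will show that \(E\) and \((J-R)Q\) have a common right nullspace, which is equivalent to \(\MM\) having a right minimal index zero.

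First, since \(\lambda E - Q\) is square and singular, its normal rank is less than \(n\). Hence it has both left and right minimal indices: the number of right singular blocks minus the number of left singular blocks equals \(n - \nrank\) for columns and rows alike, so both counts are \(n - \nrank(\lambda E - Q) \geq 1\). In particular, the KCF of \(\lambda E - Q\) contains at least one right singular block. Since \(E^*Q = Q^*E \geq 0\), Theorem \ref{th_EQ} applies with \(m = n\) and forces every right minimal index of \(\lambda E - Q\) to be zero. So \(\lambda E - Q\) contains a block \(\LL_0\).

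Next I translate this block into a vector. A block \(\LL_0\) in the KCF means there are nonsingular \(V, W\) such that some column of \(W\), call it \(x \neq 0\), satisfies \(V(\lambda E - Q)x = 0\) identically in \(\lambda\). Because \(V\) is invertible, \(Ex = 0\) and \(Qx = 0\), so \(x \in \ker E \cap \ker Q\). Then \((J-R)Qx = 0\) as well, so \(x \in \ker E \cap \ker\bigl((J-R)Q\bigr)\), and \(\MM x = 0\) for all \(\lambda\).

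Finally, a nonzero constant vector in the kernel of \(\MM\) gives a right minimal index zero. This is standard: a minimal polynomial basis of the right null space of \(\MM\) can be chosen to contain a vector of degree zero whenever a constant null vector exists. Concretely, after a change of basis \(W'\) whose first column is \(x\), the pencil \(\MM W'\) has a zero first column, and that column contributes a block \(\LL_0\) to the KCF.

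The main obstacle is the first step: justifying that a singular square pencil must have a right singular block, and that Theorem \ref{th_EQ} may be applied in the square setting. Both are routine. The former follows from the counting of rows and columns in the KCF, and the latter holds because Theorem \ref{th_EQ} covers all \(m \times n\) pencils, including \(m = n\).
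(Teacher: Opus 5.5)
Your proof is correct and follows the paper's own argument (the one in the example just before the observation): a singular square \(\lambda E - Q\) has a right singular block, Theorem \ref{th_EQ} forces it to be \(\LL_0\) (so \(\ker E \cap \ker Q \neq \{0\}\)), and such a vector lies in \(\ker E \cap \ker (J-R)Q\), which gives \(\MM\) an \(\LL_0\) block. One wording slip: the number of right singular blocks is \(n - \nrank\) and the number of left singular blocks is \(m - \nrank\) separately, not their difference, and for \(m = n\) this gives exactly the conclusion you draw.
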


Interestingly, there exist dH pencils that are also of the form \eqref{eq_form_general} with singular \(\lambda E - Q\).

\begin{example}
    Let
    \[
    \MM = \JJ_1^{\mu} \oplus \LL_0 \oplus \LL_0^T = \begin{bmatrix}
        \lambda - \mu & 0 \\
        0 & 0
    \end{bmatrix},
    \]
    where \(\mu = \alpha + i \beta\), with \(\alpha \leq 0\), \(\beta \in \RR\). We have \(\MM = \lambda E - (J - R) Q\) for
    \[
    E = \begin{bmatrix}
        1 & 0 \\
        0 & 0
    \end{bmatrix},
    \quad
    Q = \begin{bmatrix}
        1 & 0 \\
        0 & 1
    \end{bmatrix},
    \quad
    J = \begin{bmatrix}
        i \beta & 0 \\
        0 & 0
    \end{bmatrix},
    \quad
    \text{ and }
    R = \begin{bmatrix}
        - \alpha & 0 \\
        0 & 0
    \end{bmatrix},
    \]
    so that \(E^*Q = Q^*E \geq 0\), \(J^* = - J\), \(R^* = R \geq 0\), and \(\lambda E - Q\) is regular.
    On the other hand, we also have \(\MM = \lambda \hat{E} - (\hat{J} - \hat{R}) \hat{Q}\) for
    \[
    \hat{E} = \begin{bmatrix}
        1 & 0 \\
        0 & 0
    \end{bmatrix},
    \quad
    \hat{Q} = \begin{bmatrix}
        0 & 0 \\
        \mu & 0
    \end{bmatrix},
    \quad
    \hat{J} = \begin{bmatrix}
        0 & 1 \\
        -1 & 0
    \end{bmatrix},
    \quad
    \text{ and }
    \hat{R} = 0.
    \]
    Here, we obtain that \(\hat{E}^*\hat{Q}= \hat{Q}^*\hat{E} \geq 0\), \(\hat{J}^* = - \hat{J}\), and \(\hat{R}^* = \hat{R} \geq 0\), but \(\lambda \hat{E} - \hat{Q}\) is singular.
\end{example}

\section*{Conclusions and future research}

We have studied the strict equivalence orbit structure of dH pencils associated with linear time-invariant dissipative Hamiltonian descriptor systems. In particular, we derived a restriction on possible degenerations in a special case, characterised the maximal elements of the partial order induced by orbit closure inclusion, and analysed several properties of the orbit closures. A complete characterisation of the structure-preserving
stratification of dH pencils is left for future work. 

We also investigated pencils arising naturally in the study of orbit closures of dH pencils and obtained a partial characterisation of their Kronecker canonical form. Our results show that omitting an additional assumption fundamentally changes the admissible KCF structure: canonical blocks of every type and size may occur, although not in arbitrary combinations. A full characterisation of the KCF of pencils of this type remains an open problem.

\vspace{10pt}

{\small \textbf{Acknowledgements:}
The author would like to thank Micha\l{} Wojtylak for a valuable discussion on the subject.

The author gratefully acknowledges the support of the strategic programme Initiative for Excellence at the Jagiellonian University in Krak\'ow.
}

\end{document}